\numberwithin{equation}{section}
\providecommand{\keywords}[1]{\textbf{\text{Keywords: }} #1}
\def\R{\mathbb{R}}
\def\E{\mathbb{E}}
\def\N{\mathbb{N}}
\def \e{\text{e}}
\def \half{\tfrac12}
\def \d{{\rm d}}
\newcommand{\dd}{\text{d}}
\numberwithin{equation}{section}
\newtheorem{thm}{Theorem}[section]
\newtheorem{lem}[thm]{Lemma}
\newtheorem{cor}[thm]{Corollary}
\newtheorem{example}[thm]{Example}
\newtheorem{assumption}[thm]{Assumption}
\begin{document}
	
\title{Unconditionally positivity-preserving explicit Euler-type schemes for a generalized A\"{i}t-Sahalia model
\thanks {This work was supported by Natural Science Foundation of China (12071488, 12371417, 11971488) and Natural Science Foundation of Hunan Province (2020JJ2040).}}
\author{\normalsize Ruishu Liu,\quad Yulin Cao, \quad Xiaojie Wang{\thanks{Corresponding author: x.j.wang7@csu.edu.cn}}\\
\footnotesize School of Mathematics and Statistics, HNP-LAMA, Central South University, Changsha, China}

\maketitle

\begin{abstract}
The present work is devoted to strong approximations of a generalized A\"{i}t-Sahalia model arising from mathematical finance. 
The numerical study of the considered model faces essential difficulties caused by a drift that blows up at the origin, highly nonlinear drift and diffusion coefficients and positivity-preserving requirement.
In this paper, a novel explicit Euler-type scheme is proposed, which is easily implementable and  able to preserve positivity of the original model unconditionally, i.e., for any time step-size $h >0$.
A mean-square convergence rate of order $0.5$ is also obtained for the proposed scheme in both non-critical and general critical cases.
Our work is motivated by the need to justify the multi-level Monte Carlo (MLMC) simulations for the underlying model, where the rate of mean-square convergence is required and the preservation of positivity is desirable particularly for large discretization time steps.
%To the best of our knowledge, this is the first paper to propose an unconditionally positivity preserving explicit scheme with order $1/2$ of mean-square convergence for the model.
Numerical experiments are finally provided to confirm the theoretical findings.
\end{abstract}

\keywords
    A\"{i}t-Sahalia model; 
    Explicit scheme; 
    Unconditionally positivity preserving;
    Strong convergence rate.

\section{Introduction}

The pricing of financial assets has been an important issue in mathematical finance over the past few decades, where
%In the theoretical progress, 
%researchers gradually found that 
stochastic differential equations (SDEs) are prominent models frequently used to capture the behaviors of financial systems. 
Notable examples include the Black-Scholes model \cite{black1973pricing}, the Merton model \cite{merton1973theory}, and the Cox-Ingersoll-Ross (CIR) model \cite{cox2005theory} in the modern financial theory.
The economist and mathematician 
A\"{i}t-Sahalia conducted an empirical study in \cite{ait1996testing} and investigated several continuous-time models for interest rates. 
He tested the models by comparing the parameter density implied by the parameter models and found that all the existing univariate linear drift models fail to explain the euro dollar well.
As a consequence of this work, a nonlinear stochastic interest model which captures well of the dynamics of the spot rate was proposed and named as the A\"{i}t-Sahalia model, which is now widely used in finance and economics, 
%for example, market stochastic volatility %\cite{hong2010modeling,lu2019critical}, 
and described by the following It\^{o}-type SDE:
\begin{equation}\label{2023AS-introduction_AS_model_kappa=2}
    \mathrm{d} X_t
    =
    \big( c_{-1} X_t^{-1} - c_0 + c_1 X_t - c_2 X_t^{2} \big)
    \, \dd t
    +
    c_3 X_t^{\rho} 
    \, \dd W_t,
    \quad
    X_0=x_0 > 0.
\end{equation}
Here, $c_{-1}, c_{0}, c_{1}, c_{2}, c_{3}> 0$, $\rho>1$ and $\{ W_t \}_{t \in [0, T]}$ stands for a standard Brownian motion defined on a complete probability space $(\Omega, \mathcal{F}, \mathbb{P})$.  
A\"{i}t-Sahalia \cite{ait1996testing} proved that the model \eqref{2023AS-introduction_AS_model_kappa=2} admits a unique global positive solution using the Feller explosion test.
In 2009, Cheng \cite{cheng2009highly} derived the finite moment bounds for the analytic solutions and proved that the Euler–Maruyama approximate solutions are convergent in probability.
%the analytical properties of the solution of \eqref{2023AS-introduction_AS_model_kappa=2}, such as non-negative and moment boundedness \cite{cheng2009highly}, and in the probabilistic sense, EM method converges to the true solution. 
Later in 2011, Szpruch et al. \cite{szpruch2011numerical} extended the model \eqref{2023AS-introduction_AS_model_kappa=2} to a more general version given by
\begin{equation}\label{2023AS-introduction_AS_model_kappa}
\begin{aligned}
    \mathrm{d} X_t
    = & 
    (c_{-1} X_t^{-1} - c_0 + c_1 X_t - c_2 X_t^{\kappa})
    \, \dd t
    +
    c_3 X_t^{\rho} \dd W_t,
    \quad
    X_0 = x_0 >0,
    \:
    \kappa > 1,
\end{aligned}
\end{equation}	
and showed that the generalized model 
admits a unique global positive solution with
uniformly in time bounded $p$-th moments ($ p \in \R$)
in the non-critical case $\kappa + 1 > 2 \rho$. 
Moreover, they applied the classical backward Euler-Maruyama (BEM) scheme 
%and the so-called backward-forward Euler-Maruyama (BFEM) scheme 
to approximate the model and proved the 
positivity-preserving approximations to be strongly convergent under the condition that $\kappa+1>2\rho$, but with no convergence rates revealed.
Recently, the authors of \cite{wang2020mean} successfully recovered the mean-square convergence rate of order $1/2$ for the stochastic theta methods (STMs) with $\theta \in [\half,1]$ applied to the model \eqref{2023AS-introduction_AS_model_kappa} under the condition that $\kappa+1 \geq 2\rho$, covering the critical case $\kappa+1 = 2\rho$. 
Also, we mention that Neuenkirch and Szpruch \cite{neuenkirch2014first}
proposed a kind of Lamperti-backward Euler method for scalar SDEs defined in a domain, including the Ait-Sahalia-type model \eqref{2023AS-introduction_AS_model_kappa}, with a strong
convergence rate of order one obtained.
However, the computational costs of 
solving implicit  algebraic equations 
rise as the parameter $\kappa$ increases. 
%and extended it to prove the A\"{i}t-Sahalia module for the case of $\kappa+1 \geq 2\rho$ type \eqref{2023AS-introduction_AS_model_kappa}, the random $\theta$ method is convergent and its strong convergence order of 0.5.\\
%%%%%%%
More recently, 
a truncated explicit Euler method was proposed in \cite{emmanuel2021truncated} to approximate 
\eqref{2023AS-introduction_AS_model_kappa}, but with no convergence rate revealed.
%Before submitting the present work to arXiv, 
When we finish the draft of the present work,
we are also aware of two closely related publications \cite{deng2023positivity,Lord2024convergence}. 
The authors of \cite{deng2023positivity} 
recovered a mean-square 
convergence rate of order nearly $\tfrac14$ for 
the truncated Euler method only in the non-critical case $\kappa+1>2\rho$.
The work \cite{Lord2024convergence} proposed
an exponential tamed method
based on the Lamperti transform and splitting,
which is strongly convergent with order one,
but not strictly positivity preserving.
{\color{black}
Finally, it is worthwhile to mention an interesting paper \cite{halidias2022boundary}, where, by combining a Lamperti-type transformation with a semi-implicit approach, the authors proposed a positivity preserving scheme for the model in the particular critical case $\kappa=2, \rho = \tfrac32$. For the particular critical case, the proposed scheme 
can be explicitly solved, by finding a positive root of a quadratic equation. Moreover, 
it was shown to have order one strong convergence (see \cite{halidias2022boundary} for more details). 
%Transforming back leads to an order one strong convergence rate for the scheme.
}

%Inspired by the ideas of \cite{szpruch2011numerical,wang2020mean}, 
In the present article, we aim to design novel 
explicit positivity preserving schemes for
approximations of the A\"{i}t-Sahalia model 
\eqref{2023AS-introduction_AS_model_kappa} with a mean-square convergence rate of order $\tfrac12$.
%cheap computational costs. 
More precisely,
on a uniform mesh with a uniform 
step-size $h = \tfrac{T}{N},\, N \in \mathbb{N}$
we propose the following time-stepping scheme:
\begin{equation}
    Y_{n+1}
    =
    Y_{n}
    +
    c_{-1} Y_{n+1}^{-1} h
    +
    \big( - c_0 + c_1 Y_n + f_h(Y_n) \big) h
    +
    g_h(Y_n) \Delta W_n,
    \quad
    Y_0 = X_0,
\end{equation}
where $f_h$ and $g_h$ are 
some explicit modifications of $f (x) := - c_2 x^{\kappa}$ and $g (x) : = c_3 x ^{\rho}$ in the SDE model, respectively (see Sect. \ref{2023AS-section:EPE_scheme} for details).
%{\color{lightgray}taming factors}
%(see \eqref{2023AS-eq:EUPE_scheme} for details).
Clearly, the implicitness only appears in the term
$c_{-1} Y_{n+1}^{-1} h $, which is the key element
 to preserve the positivity of the original model 
 (see Lemma \ref{lem:positive-preserving}).
On the contrary,  the remaining terms are explicitly handled. It is not difficult to see
that, given the former step $Y_n$, 
the current step $Y_{n+1}$ can be explicitly 
known, as the unique positive root of a quadratic equation (see \eqref{eq:scheme-solution} below for 
the explicit solution).
In this way, the computational costs are significantly reduced compared with the implicit schemes (BEM or STMs)
in existing publications \cite{szpruch2011numerical,wang2020mean}, 
where implicit algebraic equations are required
to be numerically solved for every step.
%Such strategy is called a semi-implicit method.
Furthermore, the proposed explicit Euler-type 
scheme  is  positivity-preserving for any time step-size 
$h > 0$, which is  particularly desirable for the multi-level Monte Carlo (MLMC) simulations \cite{giles08multilevel} with large discretization time steps. 

To justify the MLMC simulations, the mean-square convergence rate of the approximations is also necessary
\cite[Theorem 3.1]{giles08multilevel}.
Recall that the analysis of convergence rates in  \cite{wang2020mean} for the implicit schemes does not rely on a priori moment estimates of numerical approximations.
Nevertheless,  due to the presence of the explicit treatment
in the drift term, the analysis of convergence rates
for the proposed semi-implicit schemes relies on 
a finite moment bound of numerical approximations:
%
%The finite moment bounds of the numerical approximations are required to obtain a strong convergence rate of the scheme.
%This is the reason why we introduce taming factors in the non-linear parts of the explicit terms.
\begin{equation} \label{eq:intro-moment-bounds}
    \sup_{ N \in \N}
    \sup _{0 \leq n \leq N} 
    \E \big[ \vert Y_{n} \vert ^{p} \big]
    < \infty,
    \quad
    p \geq 2.
\end{equation}
%
%The ... can also be ensured (see Lemma \ref{2023AS-lem:Y_integrability_gamma}):
Deriving such $p$-th moment integrability of the numerical approximations is, however, not an easy task
and some non-standard arguments are exploited (see the proof of Lemma \ref{2023AS-lem:Y_integrability_gamma}) to overcome essential difficulties caused by a drift that blows up at the origin, highly nonlinear diffusion coefficients and a mixture of implicitness and explicitness
in the drift part of the scheme.
%since the implicit term is not linear so that it can not be controlled as easily as a linear one in the common cases.
%
%%%%%%%%%%%%%%%%%%%%%%%%%%%%%%%%%%%%%%%%%%%%%%%%%%%%%%%%%%
\iffalse
As a consequence, we split the explicit terms apart and build a piecewise continuous form for the expression.
After estimations on the continuous intervals, an iteration is conducted between different intervals and the integrability of the numerical solutions is thus proved.
\fi
%%%%%%%%%%%%%%%%%%%%%%%%%%%%%%%%%%%%%%%%%%%%%%%%%%%%%%%%%%
%
Equipped with the moment bound \eqref{eq:intro-moment-bounds}, one is able to identify 
the mean-square convergence rate of order $0.5$ 
for the proposed scheme \eqref{2023AS-eq:EUPE_scheme} 
applied to the model \eqref{2023AS-introduction_AS_model_kappa}
(see Theorem \ref{2023AS-thm:convergence_rate_k+1>=2rho}):
\begin{equation}
    \sup _{0\leq n\leq N}
    \E \big[
        \big\vert X_{t_n}-Y_{n}\big\vert^2
    \big]
    \leq 
    C h
\end{equation}
in both non-critical ($\kappa + 1 > 2 \rho$) and 
general critical ($\kappa + 1 = 2 \rho$) cases,
where the constant $C$ is independent of the time step-size $h$.
To the best of our knowledge, 
    this is the first paper to devise 
    an unconditionally 
    positivity preserving explicit scheme
    with order $\tfrac12$ of mean-square convergence
    {\color{black}for the non-critical case of 
    the  A\"{i}t-Sahalia model \eqref{2023AS-introduction_AS_model_kappa}.}
%
%Besides, we would like to mention that all the results hold under the condition that $\kappa + 1 \geq 2 \rho$, covering the critical case.
%but with less computational complexity.

The rest of this paper is organized as follows.
In the next section, basic notations and properties of the original model are presented. 
In Sect. \ref{2023AS-section:EPE_scheme} the newly proposed scheme is introduced and proved to be positivity preserving and moment bounded.
The mean-square error bounds for the proposed scheme is 
carefully analyzed in Sect. \ref{2023AS-section:convergence_rate}, with order one-half of convergence revealed.
Numerical experiments are shown in Sect. \ref{2023AS-section:numerical_experiments} to verify our theoretical results, and a short conclusion is made in Sect. \ref{2023AS-section:conclusion}.

%\section{Preliminaries}
%\subsection{Settings and Notations}
\section{The generalized A\"{i}t-Sahalia model}
Throughout this paper, we use the following notations.
Let $ \vert  \cdot \vert  $ and $ \langle \cdot ,\cdot \rangle $
be the Euclidean norm and the inner product in $ \mathbb{R} $, respectively.
%Let 
%$ ( \Omega, \mathcal{ F }, \{\mathcal{ F }_t\}_{t \in [0,T]} ,\P ) $
%be a filtered probability space satisfying the usual hypotheses,
%that is, it is right continuous and increasing while
%$ \mathcal{F}_{0} $ contains all $ \P $-null sets. 
%Let $ \{W_t\}_{t \in [0,\infty)} $ be a one-dimensional Brownian motion.
Let $\{ W_t \}_{t \geq 0}$ stand for a standard Brownian motion defined on a complete filtered probability space $(\Omega, \mathcal{F}, (\mathcal{F}_t)_{t\geq 0}, \mathbb{P})$, where the filtration satisfies usual conditions, and let $ \E $ denote the expectation.
Let $ L^p(\Omega;\R) $ ($p \geq 1$) denote the space consisting of all $ \R $-valued $ p $-times integrable random variables with the norm $\Vert \cdot \Vert_{L^p(\Omega;\R)} $ defined as
\begin{equation}
    \Vert  \xi \Vert _{L^p(\Omega;\R)} 
    := 
    \big ( \E \big[ \vert \xi \vert ^p \big] \big)^{ 1/p} < \infty.
\end{equation}
%for any $\xi \in L^p(\Omega;\R)$. 
%For any $ x, y \in \mathbb{R} $ we denote
%\begin{equation}
%    x \vee y :=\max \{x, y\}
%    \quad
%    \text{and}
%    \quad
%    x \wedge y :=\min \{x, y\}.
%\end{equation}
%
In this work, we focus on the generalized A\"{i}t-Sahalia model taking the form as follows:
\begin{equation}\label{2023AS-eq:model_SDE}
	\dd X_t= (c_{-1} X_t^{-1} - c_{0}+ c_{1} X_t - c_{2} X_t^{\kappa})
	\, \dd t
	+c_{3} X_t^{\rho} \dd W_t , \quad
	X_0=x_0, 
	\quad
	t > 0,
\end{equation}
where $c_{-1},c_{0},c_{1},c_{2},c_{3}>0$, $\kappa$, $\rho>1$ and $\kappa+1\geq2\rho$.
For the particular case $\kappa = 2$, the underlying model \eqref{2023AS-eq:model_SDE} reduces into \eqref{2023AS-introduction_AS_model_kappa=2}, originally proposed by
A\"{i}t-Sahalia \cite{ait1996testing}.
%Moreover, we let $ \{W_t\}_{t \in [0,\infty)}$ be a one-dimensional Brownian motion.
%In order to ensure the existence and ({\color{black} correctness})
%{\color{red} uniqueness}
%of the analytical solution for SDE \eqref{2023AS-eq:model_SDE}, we need the following assumption.
%\begin{assumption}\label{ass:numerical-moments}
%For all $p\geq 1$, the initial condition of the system satisfies
% \begin{equation}
%\E\big[\vert X_{0}\vert ^{2p}\big]<\infty.
%\end{equation}
%\end{assumption}
%
%\subsection{Well-posedness and integrability of the model}
%
The well-posedness of the generalized model \eqref{2023AS-eq:model_SDE} has been established in the following theorem quoted from \cite[Theorem 2.1]{szpruch2011numerical}.
\begin{thm}
\label{thm:AS-model-well-posedness}
Let $c_{-1},c_{0},c_{1},c_{2},c_{3}>0$, $\kappa$, $\rho>1$.
    Given any initial value $X_0 > 0$, there exists a unique, positive global solution $X_t$ to the SDE \eqref{2023AS-eq:model_SDE} on $t \geq 0$.
\end{thm}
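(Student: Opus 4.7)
The plan is to combine local existence/uniqueness from classical SDE theory with a Lyapunov-function non-explosion argument. Since the drift $b(x) := c_{-1}x^{-1} - c_0 + c_1 x - c_2 x^{\kappa}$ and the diffusion $\sigma(x) := c_3 x^{\rho}$ are smooth, hence locally Lipschitz, on the state space $(0,\infty)$, the standard existence theorem yields a unique maximal strong solution $(X_t)_{t < \tau_e}$ taking values in $(0,\infty)$, where $\tau_e := \lim_{n \to \infty} \tau_n$ with the localizing stopping times $\tau_n := \inf\{ t \geq 0 : X_t \notin (1/n, n) \}$ for $n$ large enough that $x_0 \in (1/n, n)$. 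The entire burden then reduces to proving $\tau_e = \infty$ almost surely, as this delivers both globality and the strict positivity $X_t > 0$ for all $t \geq 0$.

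To control both boundaries $0$ and $\infty$ simultaneously, I would introduce the two-sided Lyapunov function $V(x) := x^p + x^{-q}$ on $(0,\infty)$ with exponents $p, q > 0$ to be specified. A direct computation gives
\[
    \mathcal{L} V(x) = \big( p x^{p-1} - q x^{-q-1} \big) b(x) + \tfrac12 \big( p(p-1) x^{p-2} + q(q+1) x^{-q-2} \big) \sigma^2(x).
\]
Expanding yields ten monomials, and the central observation is a double dominance phenomenon: near the origin, the interaction of the blowup drift $c_{-1}x^{-1}$ with the $-q x^{-q-1}$ factor produces the strongly negative leading term $-q c_{-1} x^{-q-2}$, whose exponent is strictly smaller than every other exponent in $\mathcal{L} V$, so it overwhelms all positive contributions as $x \to 0^+$. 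At infinity, the interaction of the mean-reverting drift $-c_2 x^{\kappa}$ with $p x^{p-1}$ gives the negative term $-p c_2 x^{p + \kappa - 1}$, which must be balanced against the positive diffusion term $\tfrac12 p(p-1) c_3^2 x^{p + 2\rho - 2}$. The standing assumption $\kappa + 1 \geq 2\rho$ ensures $p + \kappa - 1 \geq p + 2\rho - 2$; in the non-critical case the drift exponent is strictly larger and dominance is automatic, while in the critical case $\kappa + 1 = 2\rho$ one simply picks $p \in (0, 1]$ so that the common coefficient $-p c_2 + \tfrac12 p(p-1) c_3^2$ is non-positive.

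With $p, q$ so chosen, a routine calculation yields the Khasminskii-type estimate $\mathcal{L} V(x) \leq K (1 + V(x))$ for some constant $K > 0$ independent of $x$. Applying It\^o's formula to $V(X_{t \wedge \tau_n})$, taking expectation (the local martingale part is a true martingale up to $\tau_n$ by boundedness of the integrand on $(1/n, n)$), and invoking Gronwall's inequality delivers
\[
    \E \big[ V(X_{t \wedge \tau_n}) \big] \leq \big( V(x_0) + K t \big) e^{K t}
\]
uniformly in $n$. On the event $\{ \tau_n \leq t \}$ one has $V(X_{\tau_n}) \geq \min(n^p, n^q)$, so Chebyshev's inequality gives $\P(\tau_n \leq t) \leq (V(x_0) + K t) e^{K t} / \min(n^p, n^q) \to 0$ as $n \to \infty$, whence $\tau_e \geq t$ a.s.\ for every $t > 0$, and thus $\tau_e = \infty$ a.s.

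The main obstacle is designing a Lyapunov function that simultaneously tames the singular drift at the origin and the highly nonlinear drift/diffusion interplay at infinity; I expect the choice of exponents $p, q$, and in particular the quantitative restriction on $p$ that is forced in the critical case $\kappa + 1 = 2\rho$, to be the most delicate point. Routine subordinate issues, such as verifying local Lipschitz continuity and justifying the stopping of the stochastic integral, are entirely standard and I would relegate them to references.
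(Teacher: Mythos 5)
The paper states this theorem by citation to Szpruch et al.~\cite{szpruch2011numerical} and offers no proof of its own; your Khasminskii-type Lyapunov argument with $V(x)=x^p+x^{-q}$, combining local Lipschitz existence on $(0,\infty)$ with a non-explosion estimate via It\^o, Gronwall and Chebyshev, is exactly the standard route and is correct. One small remark: choosing $p\in(0,1]$ already makes the coefficient $\tfrac12 p(p-1)c_3^2$ of $x^{p+2\rho-2}$ non-positive, so the well-posedness conclusion (and the cited theorem) holds for any $\kappa,\rho>1$ without the restriction $\kappa+1\geq 2\rho$ that you invoke --- that inequality is genuinely needed in the paper only for the moment and H\"older bounds, not for existence, uniqueness, and positivity.
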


%As for the integrability, the case $\kappa + 1 > 2 \rho$ is already investigated in 
Furthermore, we recall the moment integrability properties
of the solution to \eqref{2023AS-eq:model_SDE} as follows
(see \cite[Lemma 2.1]{szpruch2011numerical} and 
\cite[Lemma 4.6]{wang2020mean} for details).
%%================================================
\iffalse
similar as in \cite{szpruch2011numerical}, for any $p \geq 2$ we introduce a function
\begin{equation}
    V(x,t)
    =
    e^t x^p
\end{equation}
and apply the It\^{o} formula to compute the diffusion operator 
\begin{equation}\label{2023AS-eq:LV_definition}
    L V(x,t)
    =
    e^t \Big(
        x^p 
        + 
        p x^{p-1}\big(
            c_{-1} x^{-1} - c_0 + c_1 x - c_2 x^{\kappa}
        \big)
        +
        \tfrac{c_3^2}{2} p(p-1) x^{p - 2 + 2 \rho}
    \Big).
\end{equation}
Clearly, the right hand side of \eqref{2023AS-eq:LV_definition} admits no finite upper bound when $p-1+\kappa < p-2 + 2 \rho$, that is, $\kappa + 1 < 2 \rho$. 
Therefore, at least $\kappa + 1 \geq 2 \rho$ is required to achieve boundedness.
The case $\kappa + 1 > 2 \rho$ is already investigated in \cite{sauer2011numerical}, from which the following lemma is quoted.
\fi
%%===============================================
\begin{lem}
\label{2023AS-lem:X_integrability_k+1>2rho}
    Let conditions in Theorem \ref{thm:AS-model-well-posedness} be satisfied with $\kappa + 1 > 2 \rho$. For any $p \geq 2$, it holds
    \begin{equation}
        \sup_{0 \leq t < \infty} 
        \E \big[ \vert X_t \vert^p \big]
        <
         \infty
        \quad
        \text{and}
        \quad
        \sup_{0 \leq t < \infty} 
        \E \big[ \big\vert \tfrac{1}{X_t} \big\vert^p \big]
        <
         \infty.
    \end{equation}
\end{lem}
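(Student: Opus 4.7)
The plan is a Lyapunov-It\^o argument exploiting two structural features of \eqref{2023AS-eq:model_SDE}: under $\kappa + 1 > 2\rho$ the confining drift term $-c_2 x^\kappa$ strictly dominates the diffusion amplification $c_3^2 x^{2\rho}$ at infinity, while the singular drift $c_{-1}/x$ repels $X_t$ from the origin.

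I would first handle the positive moment via the Lyapunov function $V(x,t) := \e^{\lambda t} x^p$ with $\lambda > 0$ to be tuned. Its generator reads
\begin{align*}
    LV(x,t) = \e^{\lambda t}\Bigl[
        & \lambda x^p + p c_{-1} x^{p-2} - p c_0 x^{p-1} + p c_1 x^p \\
        & - p c_2 x^{p-1+\kappa} + \tfrac{p(p-1)}{2} c_3^2 x^{p-2+2\rho}
    \Bigr].
\end{align*}
Under $\kappa + 1 > 2\rho$ one has $p - 1 + \kappa > p - 2 + 2\rho$, so for every fixed $\lambda$ (in particular any small one) the term $-p c_2 x^{p - 1 + \kappa}$ absorbs every positive contribution for large $x$; near $x = 0$, every positive term is uniformly bounded because $p \geq 2$. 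Hence $LV(x,t) \leq C_0 \e^{\lambda t}$ pointwise for some constant $C_0$. Applying It\^o's formula on $[0, t \wedge \tau_n]$, where $\tau_n := \inf\{s \geq 0 : X_s \notin [1/n, n]\}$ localises away from the boundary, taking expectation to kill the local-martingale term, sending $n \to \infty$ via Fatou (using nonexplosion from Theorem \ref{thm:AS-model-well-posedness}), and dividing by $\e^{\lambda t}$ gives $\sup_{t \geq 0} \E[X_t^p] < \infty$.

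For the reciprocal moment I would use $V(x,t) := \e^{\lambda t} x^{-p}$. The generator then contains the singular pair $-p c_{-1} x^{-p-2}$ and $\tfrac{p(p+1)}{2} c_3^2 x^{2\rho - p - 2}$; because $2\rho - p - 2 > -p - 2$, the negative contribution strictly dominates as $x \to 0^+$, and likewise dominates all milder singular positive terms $p c_0 x^{-p-1}$ etc. At infinity the only potentially offending term is the polynomial $p c_2 x^{\kappa - p - 1}$, whose expectation is controlled either by the positive-moment bound from the first step (when its exponent is nonnegative) or by $x^{-p}$ itself plus a constant (when the exponent is negative, since the singularity is then milder than $x^{-p}$). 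Combining these estimates yields $\E[LV(X_t, t)] \leq C_1 \e^{\lambda t}$, and the same It\^o-Fatou-Gronwall procedure closes the argument.

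The genuine obstacle is the simultaneous calibration of the exponents so that no term appearing in $LV$ escapes control at either boundary: at infinity this is exactly what the hypothesis $\kappa + 1 > 2\rho$ delivers, while at zero it is the strict positivity $c_{-1} > 0$ that matters. The localisation by $\tau_n$ and the passage $n \to \infty$ via nonexplosion are standard but must be threaded carefully so that the stochastic integral is a genuine (rather than merely local) martingale before limits are taken.
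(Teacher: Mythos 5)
Your proposal is correct and uses the same Lyapunov--It\^o argument as the paper's cited source (Szpruch et al., Lemma 2.1, from which the paper quotes this result without re-proving it): compute the generator of $\e^{\lambda t}x^{\pm p}$, observe that $\kappa+1>2\rho$ makes the term $-p c_2 x^{p-1+\kappa}$ dominant at infinity while $c_{-1}>0$ controls the origin, then localise with $\tau_n$, take expectations to kill the martingale, and pass to the limit via Fatou together with nonexplosion. A minor stylistic remark is that taking $V(x,t)=\e^{\lambda t}\bigl(x^p+x^{-p}\bigr)$ from the start gives a pointwise bound $LV\leq C\e^{\lambda t}$ in one stroke and spares the case split on the sign of $\kappa-p-1$ in your reciprocal step, since $-p c_2 x^{p-1+\kappa}$ also dominates $p c_2 x^{\kappa-p-1}$ at infinity.
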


For the general critical case $\kappa+1 = 2 \rho$, 
we have slightly different assertions, as stated 
in the following lemma (cf. \cite[Lemma 4.6]{wang2020mean}).
\begin{lem}
\label{2023AS-lem:X_integrability_k+1=2rho}
    Let conditions in Theorem \ref{thm:AS-model-well-posedness} be satisfied with $\kappa + 1 = 2 \rho$.
    Then for any $2 \leq p_1 \leq \tfrac{2 c_2}{c_3^2} + 1$ and for any $p_2 \geq 2$, we have
    \begin{equation}
        \sup_{0 \leq t < \infty} 
        \E \big[ \vert X_t \vert^{p_1} \big]
        <
        \infty
        \quad
        \text{and}
        \quad
        \sup_{0 \leq t < \infty} 
        \E \big[ \big\vert \tfrac{1}{X_t} \big\vert^{p_2} \big]
        <
        \infty.
    \end{equation}
\end{lem}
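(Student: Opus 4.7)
My approach is the standard Lyapunov/It\^o method, with two key twists tailored to the critical regime $\kappa+1=2\rho$. For the $p_1$-th moment bound, I would apply It\^o's formula to $V(X_t,t) = e^{t} X_t^{p_1}$ and compute
\begin{equation*}
    LV(x,t)
    =
    e^{t} \Big(
        x^{p_1}
        + p_1 x^{p_1-1}\big(c_{-1}x^{-1} - c_0 + c_1 x - c_2 x^{\kappa}\big)
        + \tfrac{c_3^2}{2} p_1(p_1-1) x^{p_1-2+2\rho}
    \Big).
\end{equation*}
In the critical case, the two highest-order terms $-p_1 c_2 x^{p_1-1+\kappa}$ and $\tfrac{c_3^2}{2} p_1(p_1-1) x^{p_1-2+2\rho}$ share the same exponent, and combine into a term with coefficient $p_1\bigl(\tfrac{c_3^2}{2}(p_1-1) - c_2\bigr)$. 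The hypothesis $p_1 \leq \tfrac{2c_2}{c_3^2}+1$ makes this coefficient non-positive, so this term may be dropped; every other contribution (including $p_1 c_{-1} x^{p_1-2}$ and the $c_1 x^{p_1}$ term) can be dominated by $Ke^{t}(1+x^{p_1})$ via Young's inequality. Localizing with $\tau_n = \inf\{t\geq 0 : X_t \notin (1/n,\,n)\}$, taking expectation to kill the martingale part, applying Gronwall's inequality, and sending $n\to\infty$ via Fatou yields $\E[e^{t\wedge\tau_n} X_{t\wedge\tau_n}^{p_1}] \leq x_0^{p_1} + C(e^{t}-1)$, and dividing by $e^{t}$ gives the desired uniform-in-time bound on $\E[X_t^{p_1}]$.

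For the inverse moment, I would apply It\^o to $V(x)=x^{-p_2}$ (no exponential weight needed, since the drift already provides dissipation near zero). A direct computation gives
\begin{equation*}
    LV(x)
    =
    -p_2 c_{-1} x^{-p_2-2} + p_2 c_0 x^{-p_2-1} - p_2 c_1 x^{-p_2}
    + \Big( p_2 c_2 + \tfrac{c_3^2}{2} p_2(p_2+1) \Big) x^{-p_2-2+2\rho}.
\end{equation*}
The crucial observation is that the blow-up term $c_{-1}X_t^{-1}$ in the drift contributes $-p_2 c_{-1} x^{-p_2-2}$, strictly negative and leading as $x\downarrow 0$. The only potentially troublesome term, with exponent $-p_2-2+2\rho > -p_2-2$, is benign for small $x$ (absorbed into the dominant negative term) and, for large $x$, bounded by $1 + x^{p_1}$ for some $p_1 \geq 2\rho-2-p_2$, which in turn is controlled by the positive-moment estimate already established. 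The $p_2 c_0 x^{-p_2-1}$ term is handled by Young's inequality $x^{-p_2-1}\leq \varepsilon x^{-p_2-2} + C_\varepsilon x^{-p_2}$ with small $\varepsilon$. The same stopping-time, Gronwall, and Fatou argument then delivers the uniform bound on $\E[X_t^{-p_2}]$ for every $p_2\geq 2$.

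The main obstacle is genuinely the critical coupling $\kappa+1=2\rho$: in the positive-moment half, the constraint $p_1 \leq \tfrac{2c_2}{c_3^2}+1$ is sharp and must be extracted from a careful sign analysis of the combined leading-order coefficient, and in the inverse-moment half one must invoke the already-proven positive-moment bound to dispose of the $x^{-p_2-2+2\rho}$ term at large $x$. Organizing the two estimates sequentially (positive moments first, then inverse moments) is essential to avoid circularity.
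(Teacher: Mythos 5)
Your Lyapunov/It\^{o} strategy is precisely what the paper has in mind: the lemma is quoted from \cite{wang2020mean}, and the commented-out block in the source sets up the same operator $LV$ for $V(x,t)=e^t x^p$ and notes the sign condition on the leading coefficient. However, as written your argument does not deliver the stated \emph{uniform-in-time} bound. After you drop the non-positive leading term $p_1\bigl(\tfrac{c_3^2}{2}(p_1-1)-c_2\bigr)x^{p_1-1+\kappa}$, the best remaining bound is $LV\leq Ke^t(1+x^{p_1})$, and running Gronwall on this yields $\E\bigl[e^{t\wedge\tau_n}X_{t\wedge\tau_n}^{p_1}\bigr]\leq(x_0^{p_1}+Ke^t)e^{Kt}$, not $x_0^{p_1}+C(e^t-1)$; after dividing by $e^t$ this only shows $\E[X_t^{p_1}]\leq Ce^{Kt}$, which is vacuous for $\sup_{t\geq0}$. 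The correct move is to \emph{keep} the leading term: when its coefficient is strictly negative, it dominates the positive contribution $p_1 c_1 x^{p_1}$ (since $p_1-1+\kappa>p_1$), so the spatial part of $LV$ tends to $-\infty$ as $x\to\infty$, is continuous on $(0,\infty)$ and bounded near $0$ for $p_1\geq2$, giving $LV\leq Ke^t$ outright; then the claimed inequality follows without Gronwall. This also exposes a genuine problem at the endpoint $p_1=\tfrac{2c_2}{c_3^2}+1$: there the leading coefficient is exactly zero, $LV$ behaves like $(1+p_1c_1)e^t x^{p_1}\to+\infty$, and the argument fails. The paper's own commented derivation uses the \emph{strict} inequality $\tfrac{c_3^2}{2}p(p-1)-c_2p<0$, so your proof needs either to restrict to $p_1<\tfrac{2c_2}{c_3^2}+1$ or to supply a separate argument for the boundary value.

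For the inverse moment, your plan to bound the positive term $\bigl(p_2c_2+\tfrac{c_3^2}{2}p_2(p_2+1)\bigr)x^{-p_2-2+2\rho}$ at large $x$ by $1+x^{q}$ with $q\geq 2\rho-2-p_2$ and then invoke the first half of the lemma is not automatically legitimate: when $\rho>2$ the exponent $2\rho-2-p_2$ can exceed $\tfrac{2c_2}{c_3^2}+1$ for moderate $p_2\geq2$, in which case no admissible $p_1$ exists. A clean repair is to prove the inverse bound first for $p_2\geq 2\rho-2$, where $-p_2-2+2\rho\leq0$ and the offending term actually decays at infinity so that $LV\leq -cV+K$ directly, and then recover all $2\leq p_2<2\rho-2$ by Jensen's inequality; alternatively use a single Lyapunov function $V(x)=x^{p_1}+x^{-p_2}$ so that the strictly negative $-c_2p_1x^{p_1-1+\kappa}$ contribution dominates the positive tail in one stroke. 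Also note that without an exponential weight the correct elementary conclusion from $\frac{d}{dt}\E[V]\leq-c\,\E[V]+K$ is $\E[V(t)]\leq\max(V(0),K/c)$; "Gronwall" in the form $\frac{d}{dt}u\leq Cu+K$ produces an exponentially growing bound and should not be invoked here.
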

Based on the above moment integrability properties, 
one can also obtain the H\"{o}lder continuity of the solutions, as stated in the following lemmas 
quoted directly from \cite{wang2020mean}.
\begin{lem}
\cite[Lemma 4.4]{wang2020mean}
\label{2023AS-lem:Holder_continuous_k+1>2rho}
    Let conditions in Theorem \ref{thm:AS-model-well-posedness} be satisfied with $\kappa + 1 > 2 \rho$.
    Then it holds that, for any $p \geq 1$ and $t,s \in[0,T]$,
    \begin{align}
        \Vert X_t - X_s \Vert_{L^{p}(\Omega; \R)}
        & \leq 
        C \vert t-s \vert^{\half},\\
        \Vert X_t^{-1} - X_s^{-1} \Vert_{L^{p}(\Omega; \R)}
        & \leq 
        C \vert t-s \vert^{\half}.
    \end{align}
\end{lem}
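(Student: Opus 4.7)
The plan is to prove the two inequalities separately, leveraging the uniform-in-time $L^p$ bounds on $X_t$ and $X_t^{-1}$ provided by Lemma \ref{2023AS-lem:X_integrability_k+1>2rho}. Throughout, it suffices to handle the case $p \geq 2$, since the bound for $p \in [1,2)$ follows from the inclusion $L^{p'}(\Omega;\R) \subset L^p(\Omega;\R)$ for $p' \geq p$.

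For the first inequality, I would write the SDE \eqref{2023AS-eq:model_SDE} in integral form between $s$ and $t$ (assume $s<t$):
\begin{equation*}
    X_t - X_s
    =
    \int_s^t \big( c_{-1} X_r^{-1} - c_0 + c_1 X_r - c_2 X_r^{\kappa} \big) \, \dd r
    +
    \int_s^t c_3 X_r^{\rho} \, \dd W_r.
\end{equation*}
Applying the triangle inequality in $L^p(\Omega;\R)$, Jensen's inequality bounds the first integral by
$|t-s|^{1-1/p}\big(\int_s^t \| c_{-1} X_r^{-1} - c_0 + c_1 X_r - c_2 X_r^{\kappa} \|_{L^p}^p \, \dd r\big)^{1/p}$,
which is of order $|t-s|$ because Lemma \ref{2023AS-lem:X_integrability_k+1>2rho} gives uniform-in-$r$ bounds for $\E[|X_r|^{p\kappa}]$ and $\E[|X_r^{-1}|^p]$. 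The stochastic integral is handled by the Burkholder--Davis--Gundy inequality, giving
\begin{equation*}
    \Big\| \smallint_s^t c_3 X_r^{\rho} \, \dd W_r \Big\|_{L^p(\Omega;\R)}
    \leq
    C \Big( \smallint_s^t \E[|X_r|^{2\rho}] \, \dd r \Big)^{1/2}
    \leq
    C |t-s|^{1/2},
\end{equation*}
where the last step again invokes Lemma \ref{2023AS-lem:X_integrability_k+1>2rho}. Adding both contributions yields the desired $|t-s|^{1/2}$ bound (the linear term in $|t-s|$ is absorbed since $|t-s|\leq T^{1/2}|t-s|^{1/2}$ on a finite horizon).

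For the second inequality, the slickest route is to use the algebraic identity
\begin{equation*}
    X_t^{-1} - X_s^{-1}
    =
    \frac{X_s - X_t}{X_t \, X_s}
\end{equation*}
together with the Cauchy--Schwarz inequality applied twice:
\begin{equation*}
    \E\big[ | X_t^{-1} - X_s^{-1} |^p \big]
    \leq
    \big( \E [ |X_t - X_s|^{2p} ] \big)^{1/2}
    \big( \E [ X_t^{-4p} ] \big)^{1/4}
    \big( \E [ X_s^{-4p} ] \big)^{1/4}.
\end{equation*}
The first factor is controlled by the already-established first inequality (with $2p$ in place of $p$), giving order $|t-s|^{p/2}$, while the other two factors are bounded by a constant independent of $s,t$ by Lemma \ref{2023AS-lem:X_integrability_k+1>2rho}. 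Taking $p$-th roots yields the claim. An alternative route is to apply It\^o's formula to $X_t^{-1}$, producing an SDE whose drift and diffusion coefficients are polynomial in $X_t^{\pm 1}$, and repeating the argument of the first inequality; the finite-moment hypotheses of Lemma \ref{2023AS-lem:X_integrability_k+1>2rho} again make every term integrable.

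No real obstacle is expected: the whole argument is standard once one has uniform-in-time moment bounds on both $X_t$ and $X_t^{-1}$ of arbitrary order, and this is exactly what the non-critical assumption $\kappa+1>2\rho$ buys via Lemma \ref{2023AS-lem:X_integrability_k+1>2rho}. The only mildly delicate point is bookkeeping the exponents so that every $\E[|X_r|^{q}]$ and $\E[|X_r^{-1}|^{q}]$ that appears is covered by the lemma, which is automatic since it holds for all $q \geq 2$.
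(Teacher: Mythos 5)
Your proposal is correct and follows the standard route; note that the paper itself does not prove this lemma but quotes it verbatim from \cite{wang2020mean} (Lemma~4.4 there), where the argument is exactly of the type you give: integral form of the SDE, Minkowski/H\"older on the drift integral, Burkholder--Davis--Gundy on the stochastic integral, and the uniform-in-time moment bounds of Lemma~\ref{2023AS-lem:X_integrability_k+1>2rho} (for both positive and negative powers) to close every estimate. Your treatment of $X_t^{-1}-X_s^{-1}$ via the identity $X_t^{-1}-X_s^{-1}=(X_s-X_t)/(X_tX_s)$ together with two applications of Cauchy--Schwarz is also the natural and correct argument.

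One small bookkeeping slip worth fixing: after BDG, the $L^p$ bound for $p>2$ is not
$C\bigl(\int_s^t\E[|X_r|^{2\rho}]\,\dd r\bigr)^{1/2}$ as you wrote (that is the $p=2$ It\^o-isometry form); rather one should write
\begin{equation*}
\Big\|\int_s^t c_3 X_r^{\rho}\,\dd W_s\Big\|_{L^p(\Omega;\R)}^p
\leq C\,\E\Big[\Big(\int_s^t |X_r|^{2\rho}\,\dd r\Big)^{p/2}\Big]
\leq C\,(t-s)^{p/2-1}\int_s^t\E\bigl[|X_r|^{p\rho}\bigr]\,\dd r
\leq C\,(t-s)^{p/2},
\end{equation*}
which still gives the desired $|t-s|^{1/2}$ after taking $p$-th roots. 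Since $\kappa+1>2\rho$ allows moment bounds of arbitrary order for both $X_t$ and $X_t^{-1}$, every exponent that appears is covered, and the proof goes through.
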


\begin{lem}\cite[Lemma 4.7]{wang2020mean}
\label{2023AS-lem:Holder_continuous_k+1=2rho}
    Let conditions in Theorem \ref{thm:AS-model-well-posedness} be satisfied with $\kappa + 1 = 2 \rho$.
    Then for any $t,s \in[0,T]$, it holds that
    \begin{align}
        \Vert X_t - X_s \Vert_{L^{q_1}(\Omega; \R)}
        & \leq 
        C \vert t-s \vert^{\half},
        \quad
        2 
        \leq 
        q_1 
        \leq 
        \tfrac{1}{\kappa} 
        \big( \tfrac{2 c_2}{c_3^2} + 1 \big), \\
        \Vert X_t^{-1} - X_s^{-1} \Vert_{L^{q_2}(\Omega; \R)}
        & \leq 
        C \vert t-s \vert^{\half},
        \quad
        2 
        \leq 
        q_2
        <
        \tfrac{1}{\kappa} 
        \big( \tfrac{2 c_2}{c_3^2} + 1 \big).
    \end{align}
\end{lem}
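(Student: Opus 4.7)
The plan is to handle the two inequalities separately: the first through the It\^o integral representation of $X_t - X_s$, combined with the Burkholder--Davis--Gundy (BDG) inequality, Jensen's inequality in time, and the uniform moment bounds of Lemma \ref{2023AS-lem:X_integrability_k+1=2rho}; the second by reducing to the first via the algebraic identity $X_t^{-1}-X_s^{-1}=-(X_t-X_s)/(X_t X_s)$, with the \emph{strict} inequality in the range of $q_2$ providing room to introduce an auxiliary H\"older exponent.

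For the bound on $\Vert X_t - X_s \Vert_{L^{q_1}(\Omega;\R)}$, I would start from
\begin{equation*}
X_t - X_s = \int_s^t \bigl( c_{-1} X_u^{-1} - c_0 + c_1 X_u - c_2 X_u^{\kappa} \bigr) \, \dd u + \int_s^t c_3 X_u^{\rho} \, \dd W_u,
\end{equation*}
split the right-hand side into the drift and diffusion parts, apply Jensen's inequality in time to the drift and BDG (followed by Jensen on the quadratic variation, valid for $q_1 \geq 2$) to the martingale part. After bounding $(a+b+c+d)^{q_1}$ by a constant times $a^{q_1}+b^{q_1}+c^{q_1}+d^{q_1}$, the integrands appearing on the right involve $\vert X_u^{-1}\vert^{q_1}$, $\vert X_u\vert^{q_1}$, $\vert X_u\vert^{q_1\kappa}$ and $\vert X_u\vert^{q_1\rho}$. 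Since $\kappa=2\rho-1\geq \rho$, the binding positive-moment requirement is $q_1\kappa\leq \tfrac{2c_2}{c_3^2}+1$, which coincides with the stated upper bound on $q_1$, while the negative moments are unrestricted. Lemma \ref{2023AS-lem:X_integrability_k+1=2rho} then bounds every integrand uniformly in $u$, and the time-integral factors $\vert t-s\vert^{q_1-1}\cdot(t-s)=\vert t-s\vert^{q_1}$ for the drift and $\vert t-s\vert^{q_1/2-1}\cdot(t-s)=\vert t-s\vert^{q_1/2}$ for the diffusion yield the $\half$-order H\"older bound after taking $q_1$-th roots.

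For the second inequality, I would exploit the identity $X_t^{-1}-X_s^{-1}=-(X_t-X_s)/(X_t X_s)$ and, for any conjugate pair $p, p' > 1$ with $1/p+1/p'=1$, apply H\"older's inequality to obtain
\begin{equation*}
\E\bigl[ \vert X_t^{-1} - X_s^{-1} \vert^{q_2} \bigr] \leq \E\bigl[ \vert X_t - X_s \vert^{q_2 p} \bigr]^{1/p} \cdot \E\bigl[ (X_t X_s)^{-q_2 p'} \bigr]^{1/p'}.
\end{equation*}
Since $q_2 < \tfrac{1}{\kappa}(\tfrac{2c_2}{c_3^2}+1)$ strictly, one can choose $p>1$ sufficiently close to $1$ so that $q_2 p \leq \tfrac{1}{\kappa}(\tfrac{2c_2}{c_3^2}+1)$; the already-established first inequality then bounds the first factor by $C\vert t-s\vert^{q_2/2}$. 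The second factor is handled by the Cauchy--Schwarz inequality together with the unrestricted negative moments of $X_t$ from Lemma \ref{2023AS-lem:X_integrability_k+1=2rho}.

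The main obstacle is the critical regime $\kappa+1=2\rho$, in which positive moments of $X_t$ are capped at the threshold $\tfrac{2c_2}{c_3^2}+1$: every exponent has to be tracked carefully against this ceiling. The openness of the allowed range for $q_2$ is precisely the price paid for introducing the auxiliary H\"older exponent $p>1$, which is what permits the inverse estimate to be reduced to the direct one.
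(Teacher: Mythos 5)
The paper does not prove this lemma itself---it is quoted verbatim from \cite[Lemma 4.7]{wang2020mean}---so there is no in-paper proof to compare against. Evaluated on its own terms, your argument is sound and is the standard one for such H\"older estimates: write $X_t-X_s$ as drift plus stochastic integral, use Jensen in time on the drift and BDG followed by Jensen on the quadratic variation for the diffusion (both requiring $q_1\geq 2$, as stated), and invoke Lemma \ref{2023AS-lem:X_integrability_k+1=2rho} to bound the resulting moments. Your identification of the binding constraint is correct: since $\kappa=2\rho-1>\rho$ for $\rho>1$, the exponents $q_1$, $q_1\rho$, $q_1\kappa$ on positive powers of $X$ are all dominated by $q_1\kappa$, and the negative-power moments are unrestricted, yielding exactly the ceiling $q_1\leq\tfrac{1}{\kappa}(\tfrac{2c_2}{c_3^2}+1)$ with a closed endpoint. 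For the inverse, the reduction $X_t^{-1}-X_s^{-1}=-(X_t-X_s)/(X_tX_s)$ with an auxiliary H\"older exponent $p>1$ close to $1$, then Cauchy--Schwarz on the product of inverses, is also correct, and you have correctly explained why the range for $q_2$ is open: the extra exponent $p>1$ pushes $q_2p$ strictly above $q_2$, so equality at the threshold cannot be accommodated. The computation $\E[|X_t-X_s|^{q_2p}]^{1/p}\leq C|t-s|^{q_2/2}$ is likewise consistent. No gaps.
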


Here and below, for notation simplicity, 
the letter $ C $ is slightly abused to denote a 
generic positive constant, which is independent 
of the time step-size and may vary for each appearance.

As a straightforward consequence of 
the above lemmas, one can use the H\"{o}lder inequality to show the following estimates
(consult \cite[(4.21), (4.24)]{wang2020mean}).
\begin{lem}\label{2023AS-lem:f_L^2_bounds}
Let conditions in Theorem \ref{thm:AS-model-well-posedness} be satisfied with $\kappa + 1 \geq 2 \rho$. If one of the following conditions stands:
\begin{enumerate}[(1)]
    \item $\kappa + 1 > 2 \rho$,
    \item $\kappa + 1 = 2 \rho$, $\tfrac{c_2}{c_3^2} > 2\kappa - \tfrac32$,
\end{enumerate}
then for any $t,s \in[0,T]$, it holds that
\begin{equation}
\| f (X_t)- f (X_s) \|_{L^{2}(\Omega; \R)}
\leq
C |t - s|^{\frac12},
\quad
\| g (X_t)- g (X_s) \|_{L^{2}(\Omega; \R)}
\leq
C |t - s|^{\frac12},
\end{equation}
where we denote $f (x) := - c_2 x^{\kappa}$ and $g (x) : = c_3 x ^{\rho}$ for short.
\end{lem}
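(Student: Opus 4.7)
The plan is to derive a pointwise estimate of the form
\begin{equation*}
| f(X_t) - f(X_s) | \leq c_2 \kappa \, \big( X_t^{\kappa-1} + X_s^{\kappa-1} \big) \, | X_t - X_s |,
\end{equation*}
via the mean value theorem applied to $x \mapsto x^\kappa$ (and analogously for $g$ with exponent $\rho$), then square, take expectations, and split the resulting product by H\"older's inequality with conjugate exponents $p, q > 1$ satisfying $\tfrac{1}{p} + \tfrac{1}{q} = 1$, so that
\begin{equation*}
\E \big[ | f(X_t) - f(X_s) |^2 \big] \leq C \, \Big( \E \big[ \big( X_t^{\kappa-1} + X_s^{\kappa-1} \big)^{2p} \big] \Big)^{1/p} \Big( \E \big[ | X_t - X_s |^{2q} \big] \Big)^{1/q}.
\end{equation*}
The target is to show that the first factor is bounded uniformly in $t, s$ while the second factor contributes $C |t-s|$, which then yields the claimed H\"older exponent $\half$ after taking square roots.

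For case (1), $\kappa+1 > 2\rho$, the argument is straightforward: Lemma \ref{2023AS-lem:X_integrability_k+1>2rho} gives uniform boundedness of $\E[ X_t^{2p(\kappa-1)}]$ for every $p$, and Lemma \ref{2023AS-lem:Holder_continuous_k+1>2rho} provides $\| X_t - X_s \|_{L^{2q}(\Omega;\R)} \leq C |t-s|^{\half}$ for every $q \geq 1$, so we may take, e.g., $p = q = 2$. The same moment-times-increment reasoning handles $g$, since $\rho - 1 < \kappa - 1$ makes its demands weaker.

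The core of the argument is case (2). Set $M := \tfrac{2 c_2}{c_3^2} + 1$. Lemma \ref{2023AS-lem:X_integrability_k+1=2rho} restricts the available moments to orders up to $M$, and Lemma \ref{2023AS-lem:Holder_continuous_k+1=2rho} restricts the $L^{q_1}$ H\"older continuity of $X_t - X_s$ to $q_1 \leq M/\kappa$. To apply the scheme above I need to exhibit $p, q > 1$ with $\tfrac{1}{p} + \tfrac{1}{q} = 1$, $2p(\kappa - 1) \leq M$, and $2q \leq M/\kappa$. Rewriting these as $\tfrac{1}{p} \geq \tfrac{2(\kappa-1)}{M}$ and $\tfrac{1}{q} \geq \tfrac{2\kappa}{M}$ and adding shows the compatibility condition is $\tfrac{4\kappa - 2}{M} \leq 1$, equivalently $\tfrac{c_2}{c_3^2} \geq 2\kappa - \tfrac{3}{2}$. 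The hypothesis $\tfrac{c_2}{c_3^2} > 2\kappa - \tfrac{3}{2}$ supplies strict inequality, so admissible $p, q$ exist with room to spare; fixing such a pair delivers the $f$-bound. The corresponding calculation for $g$ yields a weaker requirement ($M \geq 3\kappa - 1$, since $\rho - 1 = \tfrac{\kappa-1}{2}$), which is implied by the $f$-condition, so the same chosen exponents work.

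The main obstacle is therefore the balancing act in case (2): the moment bound on the $X^{\kappa-1}$ factor and the H\"older bound on $X_t - X_s$ live in competing $L^p$-scales whose simultaneous availability is not automatic but is precisely controlled by the threshold $\tfrac{c_2}{c_3^2} > 2\kappa - \tfrac{3}{2}$. Once the exponent pair $(p,q)$ is pinned down, the remainder is a direct application of the quoted moment and H\"older-continuity lemmas, together with the elementary inequality $(a+b)^{2p} \leq 2^{2p-1}(a^{2p}+b^{2p})$ to handle the symmetric sum.
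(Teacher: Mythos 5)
Your proposal is correct and takes essentially the approach the paper indicates: the paper states the lemma is obtained ``as a straightforward consequence of the above lemmas, one can use the H\"older inequality,'' and defers to \cite[(4.21), (4.24)]{wang2020mean} for details. Your explicit bookkeeping — mean value theorem to split off the increment factor, H\"older to separate the two scales, then tracing the exponent constraints $2p(\kappa-1)\leq M$ and $2q\leq M/\kappa$ with conjugate $p,q$ — correctly recovers the threshold: the compatibility condition $\tfrac{4\kappa-2}{M}\leq 1$ is precisely $\tfrac{c_2}{c_3^2}\geq 2\kappa-\tfrac32$, which the hypothesis supplies with a strict margin. Your observation that the $g$-requirement $M\geq 3\kappa-1$ is weaker (since $\rho-1=\tfrac{\kappa-1}{2}$ when $\kappa+1=2\rho$, and $3\kappa-1<4\kappa-2$ for $\kappa>1$) is also right and shows the same exponent pair can be reused. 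The only cosmetic gap is that when $1<\kappa<2$ the exponent $2p(\kappa-1)$ may drop below the stated range $p_1\geq 2$ of Lemma \ref{2023AS-lem:X_integrability_k+1=2rho}; this is harmless because for a positive process moments of order below $2$ are dominated by the second moment via Jensen, but it is worth a one-line remark.
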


Armed with the above properties of the underlying model,
one can turn to the numerical approximations and
analyze the resulting mean-square errors in the forthcoming
sections.
%%=====================DELETE=======================
\iffalse
\begin{lem}%\label{2023AS-lem:X_integrability_k+1=2rho}
    Let $\kappa + 1 = 2 \rho$.
    Then for any $2 \leq p < \tfrac{2 c_2}{c_3^2} + 1$,
    \begin{equation}
        \E \Big[ \sup_{0 \leq t \leq T} \vert X_t \vert^p \Big]
        <
        + \infty,
        \quad
        \forall T>0.
    \end{equation}
\end{lem}
%Thus the integrability of the analytic solution for SDE \eqref{2023AS-eq:model_SDE} can be established.
\begin{lem}\label{2023AS-lem:X_integrability_k+1>2rho}
    Under Assumption \ref{2023AS-ass:parameters_1}, for any $T>0$,
    \begin{equation}
        \sup_{0 \leq t \leq T}
        \E \Big[ \vert X_t \vert^p \Big]
        < + \infty
        \quad
        \text{and}
        \quad
        \sup_{0 \leq t \leq T}
        \E \Big[ \vert X_t \vert^{-p} \Big]
        < + \infty,
        \quad
        \forall T > 0.
    \end{equation}
\end{lem}
\textbf{Proof:} 
Equipped with \eqref{2023AS-eq:LV_bound}, the proof is a straightforward modification of the proof of Lemma 2.1 in \cite{szpruch2011numerical} so that we omit it.
\qed

We also quote from \cite{szpruch2011numerical} the following integrability lemma and omit the proof.
\begin{lem}\cite[Lemma 2.2]{szpruch2011numerical}
    Under Assumption \ref{2023AS-ass:parameters_1}, for any $T > 0$,
    \begin{equation}
        \E \Big[ \sup_{0 \leq t \leq T} \vert X_t \vert^p \Big]
        < + \infty,
        \quad
        \forall T > 0.
    \end{equation}
\end{lem}
\fi

\section{The explicit positivity-preserving Euler-type scheme} \label{2023AS-section:EPE_scheme}

In this section, we aim to propose and analyze new
strong approximation schemes for the generalized A\"{i}t-Sahalia model \eqref{2023AS-eq:model_SDE}. 
For $T \in (0, +\infty)$ and $N \in \N$, we construct 
a uniform mesh $\{t_n = n h\}_{n = 0}^N$ on the time interval $[0, T]$ with a uniform step-size $ h = \tfrac{T}{N}$.
Based on the uniform mesh, we propose an explicit Euler-type method for \eqref{2023AS-eq:model_SDE} as follows:
\begin{equation}\label{2023AS-eq:EUPE_scheme}
    Y_{n+1}
    =
    Y_{n}
    +
    c_{-1} Y_{n+1}^{-1} h
    +
    \big( - c_0 + c_1 Y_n + f_h(Y_n) \big) h
    +
    g_h(Y_n) \Delta W_n,
    \quad
    Y_0 = X_0,
\end{equation}
where
$
    \Delta W_{n} := W_{t_{n+1}} - W_{t_{n}}
$
for $n = 0,1,...,N-1$
and
{\color{black}
$f_h,g_h: (0, + \infty) \rightarrow \R$ are some 
modifications of 
\begin{equation} \label{2023AS-eq:f_g_original_definition}
f(x) : = - c_2 x^{\kappa}  
\quad
\text{and}
\quad
g (x) : = c_3 x^{\rho},
\end{equation} 
satisfying certain assumptions specified subsequently (cf. Assumptions \ref{2023AS-ass:f_g_properties}-\ref{2023AS-ass:modification_bounds}).
At first, we show that the proposed scheme preserves the positivity of the original model unconditionally.
%as shown below. 
%The strong approximation error will be presented in the next section.
%
%It can be quickly identified that the proposed scheme is well-defined as well as unconditionally positivity-preserving, as shown below.
%

\begin{lem}[Unconditional positivity-preserving]
\label{lem:positive-preserving}
Let $c_{-1},c_{0},c_{1},c_{2},c_{3}>0$, $\kappa$, $\rho>1$.
Given any initial value $Y_0 = X_0 > 0$, 
the scheme \eqref{2023AS-eq:EUPE_scheme} admits unique, positive numerical solutions 
$\{ Y_n \}_{ n \in \N} $ for any step-size $h = \tfrac{T}{N} > 0$. In other words, the scheme \eqref{2023AS-eq:EUPE_scheme} is positivity preserving unconditionally.
\end{lem}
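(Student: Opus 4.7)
The plan is to solve the defining equation for $Y_{n+1}$ explicitly and read off existence, uniqueness and positivity from the quadratic formula, then iterate.

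First, assume inductively that $Y_n > 0$ (which holds at $n=0$ since $Y_0 = X_0 > 0$). The quantities $f_h(Y_n)$ and $g_h(Y_n)$ are then well-defined real numbers, so I abbreviate the known (and $\mathcal{F}_{t_{n+1}}$-measurable) quantity
\begin{equation}
A_n := Y_n + \bigl(-c_0 + c_1 Y_n + f_h(Y_n)\bigr) h + g_h(Y_n) \Delta W_n,
\end{equation}
which is an arbitrary real number, possibly negative. The scheme \eqref{2023AS-eq:EUPE_scheme} reads $Y_{n+1} = A_n + c_{-1} h\, Y_{n+1}^{-1}$, and, since we are only interested in $Y_{n+1} \neq 0$, multiplying through by $Y_{n+1}$ gives the equivalent quadratic equation
\begin{equation}
Y_{n+1}^{2} - A_n\, Y_{n+1} - c_{-1} h = 0.
\end{equation}

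Because $c_{-1} h > 0$, the discriminant $A_n^{2} + 4 c_{-1} h$ is strictly positive and strictly larger than $A_n^{2}$, so $\sqrt{A_n^{2} + 4 c_{-1} h} > |A_n|$. The two real roots
\begin{equation}\label{eq:scheme-solution-plan}
Y_{n+1}^{\pm} = \tfrac12 \bigl( A_n \pm \sqrt{A_n^{2} + 4 c_{-1} h} \bigr)
\end{equation}
therefore have opposite signs no matter what sign $A_n$ carries. Exactly one of them, namely $Y_{n+1}^{+}$, is strictly positive, and this gives both existence and uniqueness of a positive solution to the implicit equation \eqref{2023AS-eq:EUPE_scheme} at step $n+1$. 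Induction on $n$ then yields a unique positive sequence $\{Y_n\}_{n \in \N}$ for any $h > 0$, confirming unconditional positivity-preservation.

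I do not anticipate a real obstacle here: the argument is essentially algebraic, and the only thing one must check is that the quadratic obtained after multiplying by $Y_{n+1}$ is genuinely equivalent to the original implicit relation, which is automatic as long as we discard the spurious root $Y_{n+1}=0$ (which never solves the original equation since $c_{-1} h > 0$). The only prerequisite on $f_h, g_h$ used is that they be real-valued on $(0,\infty)$, which is part of their standing definition; nothing about monotonicity, one-sided Lipschitz conditions, or sign of $A_n$ is needed, which is exactly what makes the positivity property hold for \emph{every} step-size $h > 0$.
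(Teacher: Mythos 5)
Your proof is correct, but it takes a different route from the paper's. The paper proves the lemma abstractly by studying the function $G(x) := x - c_{-1} h x^{-1}$ on $(0,\infty)$: it observes $G'(x) = 1 + c_{-1} h x^{-2} > 0$ (strict monotonicity), $\lim_{x\to 0^+} G(x) = -\infty$, and $\lim_{x\to\infty} G(x) = +\infty$, so $G$ is a continuous bijection from $(0,\infty)$ onto $\R$; hence $G(Y_{n+1}) = A_n$ has a unique positive solution for any real right-hand side $A_n$. You instead clear the denominator, reduce to the quadratic $Y_{n+1}^2 - A_n Y_{n+1} - c_{-1} h = 0$, and read off from the positive discriminant and the negative constant term that the two real roots have opposite signs, selecting the unique positive one.

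Both arguments are sound. The trade-off is worth noting: the paper's monotonicity argument is structural and would extend verbatim to other singular implicit terms such as $c_{-1} h Y_{n+1}^{-\beta}$ with $\beta > 0$, where no closed-form root is available; your quadratic-formula argument is tied to the specific $Y_{n+1}^{-1}$ form, but in return it simultaneously \emph{derives} the explicit update formula $Y_{n+1} = \tfrac12\big(A_n + \sqrt{A_n^2 + 4 c_{-1} h}\,\big)$, which the paper states separately as \eqref{eq:scheme-solution} without rederivation. So your proof is slightly less general but more economical, folding the lemma and the explicit solution into one computation. One small phrasing issue: you speak of discarding the ``spurious root $Y_{n+1}=0$,'' but $0$ is not a root of the quadratic (it evaluates to $-c_{-1}h \neq 0$), so multiplying by $Y_{n+1}$ introduces no spurious solution at all; the quadratic and the original equation have identical solution sets. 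This is harmless, but the sentence could simply be dropped.
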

\textbf{Proof:}
For $x > 0$, we introduce the function
\begin{equation}\label{2023AS-eq:Gx}
    G(x) : = x - c_{-1} h x^{-1}.
\end{equation}
Evidently, the well-posedness of the scheme 
\eqref{2023AS-eq:EUPE_scheme} in the positive domain $(0, \infty)$ is to find a unique positive solution of the implicit equation $G (x) = c$ for any $c \in \R$.
On the one hand, we observe that $c_{-1} h > 0$ and thus
\begin{equation}
\lim_{x \rightarrow 0+} G(x) = - \infty,
\quad
\lim_{x \rightarrow \infty} G(x) = + \infty.
\end{equation}
On the other hand, note that the function $G$ is 
monotonically increasing in the domain $(0, \infty)$ 
since $G'(x) = 1 + c_{-1} h x^{-2} > 0$.
As a direct consequence, for any $c \in \R$, $G (x) = c$ admits a unique root in $(0, +\infty)$ and this validates the desired assertion. 
\qed

Indeed, the unique positive solution determined 
by the proposed scheme \eqref{2023AS-eq:EUPE_scheme} is given by
\begin{equation}
\label{eq:scheme-solution}
Y_{n+1}
=
\frac{ Y_n +  \vartheta_n h +
    g_h(Y_n) \Delta W_n + \sqrt{ (Y_n + \vartheta_n h +
    g_h(Y_n) \Delta W_n )^2 + 4 c_{-1} h } }{2},
\end{equation}
where for short we denote 
\[
\vartheta_n :=  - c_0 + c_1 Y_n + f_h(Y_n).
\]
To prove the bounded moments of the numerical approximations, we formulate the following assumptions.

\begin{assumption}%[Boundedness]
\label{2023AS-ass:f_g_properties}
    %$f_h,g_h: (0, + \infty) \rightarrow \R$ are functions satisfying: 
    For any $x \in (0, + \infty)$, it holds that
        \begin{equation}\label{2023AS-eq:f_g_bounds}
            \big \vert f_h(x) \big\vert \leq C h^{- \frac12},
            \quad
            \big\vert g_h(x) \big\vert \leq C h^{- \frac12}.
        \end{equation}
\end{assumption}

\begin{assumption}%[Coupling monotone condition]
\label{2023AS-ass:monotone_coupling_condition}
%    For some positive constant $\gamma$, 
Let $\kappa + 1 \geq 2 \rho$ with $\kappa, \rho > 1$.
    There exists a positive constant $ L (\gamma)$, independent of the step-size $h>0$, such that
        \begin{align}
%        \label{2023AS-eq:SDE_monotone_coupling_condition}
%        \langle x,f(x) \rangle + \gamma \vert g(x)
%        \vert ^{2} \leq \tilde{L} < \infty,
%            \quad
%            \forall x \in (0, + \infty), \\
        \label{2023AS-eq:monotone_coupling_condition}
            \langle x,f_h(x) \rangle + \gamma \vert g_h(x)\vert ^{2} \leq L < \infty,
            \quad
            \forall x \in (0, + \infty),
        \end{align}
where $\gamma \geq \tfrac12 $ is assumed to be arbitrarily large for the case $\kappa + 1 > 2 \rho$ and 
{\color{black}$\tfrac12 \leq \gamma \leq \tfrac{c_2}{c_3^2}$}
for the critical case $\kappa + 1 = 2 \rho$.
\end{assumption}

\begin{assumption}%[Polynomial growth of modification errors]
\label{2023AS-ass:modification_bounds}
%    Let Assumption \ref{2023AS-ass:monotone_coupling_condition} hold.
%    For the case $\kappa + 1 > 2\rho $, 
There exist some positive {\color{black}constants} $m_1,m_2 \leq 2 \gamma + 1$ such that
        \begin{equation}\label{2023AS-eq:modification_bounds}
            \big\vert f (x) - f_h(x) \big\vert^2 
            \leq C h \vert x \vert^{m_1},
            \quad
            \big\vert g (x) - g_h(x) \big\vert^2
            \leq C h \vert x \vert^{m_2},
            \quad
            \forall x \in (0, +\infty)
        \end{equation}
where $f,g$ are defined by \eqref{2023AS-eq:f_g_original_definition} and $\gamma$ comes from Assumption \ref{2023AS-ass:monotone_coupling_condition}.
%%%%%%%%%%%%%%%%%%%%%%%%%%%%%%%%%%%%%
\iffalse
For the critical case $\kappa + 1 = 2\rho$, 
the estimates \eqref{2023AS-eq:modification_bounds} are assumed to hold with
\begin{equation}\label{2023AS-eq:modification_condition_kappa+1=2rho}
    m_1,m_2 \leq 2 \gamma + 1,
\end{equation}
where $\gamma$ comes from Assumption \ref{2023AS-ass:monotone_coupling_condition}.
\fi
\end{assumption}
}

{\color{black}
Next we show an example of modifications which meets all the above assumptions.

\begin{example}
\label{2023AS:eg-tamed_Euler_method}
%A generalized tamed Euler-type scheme reads as \eqref{2023AS-eq:EUPE_scheme} with  
{\color{black}For $\kappa + 1 \geq 2 \rho$,} 
let us consider an example of a modification of $f,g$ as follows:
\begin{align}
    f_h(x)
    : =
    - \tfrac{c_2 x^{\kappa}}{1+ h^{\alpha}x^{2 \kappa \alpha}},
    \quad
    g_h(x)
    : =
    \tfrac{c_3 x^{\rho}}{1+ h^{\alpha}x^{2 \kappa \alpha}},
\end{align}
where $\alpha$ is a positive constant satisfying $\alpha \geq \half$.
%For the particular case $\kappa + 1 = 2 \rho$, we requrie $ \beta \leq \tfrac{2 c_2}{c_3^2} + 1 - \kappa $ additionally.
%
Here, the choice of $f_h$ and $g_h$ are nothing but a kind of taming strategy, which was firstly proposed in \cite{hutzenthaler2012strong} and later extensively studied 
by many authors, see, e.g., \cite{wang2013tamed,brehier2020approximation,sabanis2013note,sabanis2016euler,liu2023higher,Lord2024convergence}.
%is that it enables one to derive finite moment bounds for numerical approximations even if there exist singularities in the original system.
%Besides, the error caused by such method can be controlled by the moment bounds of the analytic solutions.
To validate Assumption \ref{2023AS-ass:f_g_properties}, 
{\color{black} noticing that $2 \kappa \alpha \geq \kappa > \rho$,}
simple calculations infer that for any $x \in (0, +\infty)$
\begin{align}
    \vert f_h(x) \vert^{2 \alpha}
    \leq
    \tfrac{ c_2^{2 \alpha} x^{2 \kappa \alpha}}{1+ h^{\alpha}x^{2 \kappa \alpha}}
    \leq 
    c_2^{2 \alpha} h^{-\alpha},
    \quad
    \vert g_h(x) \vert^{\frac{2 \kappa \alpha}{\rho}}
    \leq
    \tfrac{ c_3^{\frac{2 \kappa \alpha}{\rho}} x^{2 \kappa \alpha}}{1 + h^{\alpha}x^{2 \kappa \alpha}}
    \leq
    c_3^{\frac{2 \kappa \alpha}{\rho}} h^{-\alpha}.
\end{align}
Thus, we have
\begin{align}
    \vert f_h(x) \vert 
    \leq 
    c_2 h^{- \frac12}
    \quad
    \text{and}
    \quad
    \vert g_h(x) \vert 
    \leq 
    c_3 h^{- \frac{\rho}{2 \kappa}}
    \leq
    c_3 T^{\frac12 - \frac{\rho}{2 \kappa} }
    h^{-\frac12},
\end{align}
%Noticing also that $\kappa > \rho$, 
%If we let the step-size $h \leq 1$,
which confirms Assumption \ref{2023AS-ass:f_g_properties}.
%holds when $\tfrac{\kappa \alpha}{\beta} \leq \half$.
To check Assumption \ref{2023AS-ass:monotone_coupling_condition}, we consider two possibilities:  $\kappa + 1 > 2 \rho$ and $\kappa + 1 = 2 \rho$. In the former case, one can easily observe that for any $\gamma > 0$ there exists a constant $L > 0$ such that
\begin{equation}
\begin{aligned}
    \langle x,f(x) \rangle + \gamma \vert g(x)\vert ^{2}
    & =
    -c_2 x^{\kappa + 1} + \gamma c_3^2 x^{2 \rho}
    \leq
    L,
\end{aligned}
\end{equation}
and thus
\begin{equation}
\begin{aligned}
    \langle x,f_h(x) \rangle + \gamma \vert 
    g_h(x)\vert ^{2}
    =
    \tfrac
            { 
                    -c_{2} x^{\kappa + 1}
                +
                \gamma c_{3}^2 x^{2\rho}
            }
            { 1+h^{\alpha}x^{2 \kappa \alpha}}
    \leq
	\tfrac
            {  L }
            { 1+h^{\alpha}x^{2 \kappa \alpha}}
        \leq L.
\end{aligned}
\end{equation}
For the latter case $\kappa + 1 = 2 \rho$, one
needs $\gamma \leq \tfrac{c_2}{c_3^2}$
to ensure the above two inequalities hold,
which confirm Assumption \ref{2023AS-ass:monotone_coupling_condition}.
%shows that for some positive constant $\gamma$
%Accordingly, 
%\eqref{2023AS-eq:monotone_coupling_condition} holds with $L = \tilde L$. 
%%%%%%%%%%%%%%%%%%%%%%%%%%%%%%%%%%%
\iffalse
{\color{black}
For the case $\kappa + 1 > 2 \rho$, there exists a constant $L$ such that 
\begin{equation}
    \begin{split}
	\langle x,f_h(x) \rangle + \gamma \vert g_h(x)\vert ^{2}
	\leq
	&
	\tfrac
            { L }
            { 1+\sqrt{h}x^{\beta} }
        \leq L,
    \end{split}
\end{equation}
which means that $\forall \gamma \in \R$ is sufficient to verify \eqref{2023AS-eq:monotone_coupling_condition}.
For the other case $\kappa + 1 = 2 \rho$, we have
\begin{equation}
    \begin{split}
	\langle x,f_h(x) \rangle + \gamma \vert g_h(x)\vert ^{2}
	\leq
	&
	\tfrac
            { 
                (-c_{2} + \gamma c_{3}^2)  x^{\kappa+1} 
            }
            { 1+\sqrt{h}x^{\beta} }.
    \end{split}
\end{equation}
Accordingly, 
%if $\kappa + 1 = 2\rho$, Assumption \ref{2023AS-ass:monotone_coupling_condition} 
\eqref{2023AS-eq:monotone_coupling_condition} holds with $\gamma \leq \tfrac{c_2}{c_3^2}$. 
}
\fi
%%%%%%%%%%%%%%%%%%%%%%%%
%Otherwise $\forall \gamma \in \R$ is sufficient.
Finally, it can be deduced that
\begin{align}
    \big \vert f(x) - f_h(x) \big\vert
    & =
    \tfrac{c_2 h^{\alpha} x^{(2\alpha + 1) \kappa}}{1+ h^{\alpha}x^{2 \kappa \alpha}}
    \leq
    c_2 h^{\alpha} x^{(2\alpha + 1) \kappa},\\
    \big\vert g(x) - g_h(x) \big\vert
    & =
    \tfrac{c_3 h^{\alpha} x^{{\color{black}2\kappa\alpha + \rho}}}{1+ h^{\alpha}x^{2 \kappa \alpha}}
    \leq c_3 h^{\alpha} x^{{\color{black}2\kappa\alpha + \rho}}.
\end{align}
Let $m_1 = 2(2\alpha + 1)\kappa$ and $m_2 = 2({\color{black}2\kappa\alpha + \rho})$.
{\color{black}Clearly, $m_2 \leq 4 \alpha \kappa + \kappa + 1 \leq m_1$.}
Since $\alpha \geq \half$, \eqref{2023AS-eq:modification_bounds} holds true 
on an additional condition $ 2 (2\alpha + 1) \kappa \leq  2\gamma + 1$.
%We mention that when $\kappa + 1 = 2\rho$, there is an additionally restriction on $\beta$ that $\kappa + \beta \leq 2 \gamma + 1 \leq \tfrac{2 c_2}{c_3^2} + 1$.
%Then Lemma \ref{2023AS-lem:Y_integrability_gamma}, 
%Theorem \ref{2023AS-thm:convergence_rate_k+1>2rho}
%and
%\ref{2023AS-thm:convergence_rate_k+1=2rho} can be applied to reach the expected integrability and convergence results for the tamed Euler scheme.

\end{example}
}

{\color{black}
In what follows we establish the integrability of the numerical approximations.
\begin{lem}\label{2023AS-lem:Y_integrability_gamma}
Let Assumptions \ref{2023AS-ass:f_g_properties} and \ref{2023AS-ass:monotone_coupling_condition} hold. 
%and let $\kappa + 1 \geq 2 \rho$.
Then for any $2 \leq p \leq 2 \gamma + 1$ 
and for any step-size $h>0$
it holds that
\begin{equation}
    \sup_{0 \leq n \leq N} 
    \E \big[ \vert Y_{n} \vert ^{p} \big]
    \leq
    C ( 1 + \vert x_0 \vert^p )
    < \infty.
\end{equation}
\end{lem}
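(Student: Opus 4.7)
The goal is to establish the one-step recursion
\[
\E[Y_{n+1}^p \mid \mathcal{F}_{t_n}] \leq (1 + Ch) Y_n^p + Ch,
\]
from which the uniform-in-$n$ moment bound follows by discrete Gronwall. Two features make this non-standard: the drift of the scheme couples an implicit singular term $c_{-1} h Y_{n+1}^{-1}$ with explicit modifications $f_h, g_h$; and, by Assumption \ref{2023AS-ass:f_g_properties}, those modifications may grow as $h^{-1/2}$, so that a crude $p$-th moment estimate of $g_h(Y_n)\Delta W_n$ produces contributions of order $1$ rather than $h$.

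The starting point is an algebraic identity: rewriting the scheme as $Y_{n+1}-A_n = c_{-1}h\,Y_{n+1}^{-1}$ with $A_n:=Y_n+\vartheta_n h + g_h(Y_n)\Delta W_n$ and $\vartheta_n:=-c_0+c_1 Y_n+f_h(Y_n)$, and squaring, gives
\[
Y_{n+1}^2 = A_n^2 + 2c_{-1}h - c_{-1}^2 h^2 Y_{n+1}^{-2} \leq A_n^2 + 2c_{-1}h.
\]
Raising to the $p/2$-th power and using $(x+y)^{p/2}\leq x^{p/2}+C(y\,x^{p/2-1}+y^{p/2})$ for $p\ge 2$ reduces the task to showing $\E|A_n|^p \leq (1+Ch)\E Y_n^p + Ch$, since the extra contribution is exactly $Ch\,|A_n|^{p-2}+Ch^{p/2}$, which is absorbed via $|A_n|^{p-2}\le 1+|A_n|^p$. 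In the case $p=2$, expanding $\E[A_n^2\mid\mathcal{F}_{t_n}]$ directly produces the critical combination $2Y_n f_h(Y_n)+g_h(Y_n)^2$; this is bounded by $2L$ via Assumption \ref{2023AS-ass:monotone_coupling_condition} with $\gamma=\tfrac12$, while the remainder $\vartheta_n^2 h^2$ is controlled using Assumption \ref{2023AS-ass:f_g_properties}, closing the induction at $p=2$.

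For $p>2$, I employ a non-standard two-level expansion to bypass the $h^{-1/2}$ barrier. Decompose $A_n = B_n + f_h(Y_n)h$ with $B_n:=Y_n + (c_1 Y_n - c_0) h + g_h(Y_n)\Delta W_n$, and split the state space according to whether $|f_h(Y_n)h| \le \tfrac12|B_n|$ or not. On the complementary ``bad'' region one has $|B_n|\le C h^{1/2}$, so by Assumption \ref{2023AS-ass:f_g_properties} the whole contribution to $|A_n|^p$ is $O(h^{p/2})$ and hence negligible. On the ``good'' region, a second-order Taylor expansion of $|A_n|^p$ in the $f_h$ increment, followed by a further Taylor expansion of $|B_n|^p$ in $\Delta W_n$ around $Y_n$, brings out precisely the pair
\[
p\,Y_n^{p-1} f_h(Y_n) h + \tfrac{p(p-1)}{2} Y_n^{p-2} g_h(Y_n)^2 h
= p\,Y_n^{p-2} h\big[Y_n f_h(Y_n) + \tfrac{p-1}{2}g_h(Y_n)^2\big]
\leq p L\,Y_n^{p-2} h,
\]
where Assumption \ref{2023AS-ass:monotone_coupling_condition} is applied with $\gamma=(p-1)/2$ --- this is precisely where the restriction $p\leq 2\gamma+1$ becomes essential. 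All residual terms (drift–drift, drift–diffusion cross-products, fourth-order Taylor remainders) are either of the form $Ch\,Y_n^p$, absorbable into the multiplicative factor $(1+Ch)$, or of the form $Ch\,Y_n^{p-2}$ which is in turn absorbed via Young's inequality $h\,Y_n^{p-2} \leq h\,Y_n^p + h$.

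The main obstacle is exactly the coupling of $f_h$ and $g_h$ through the monotonicity condition \emph{before} any individual bound is applied to them: treating either in isolation would only afford bounds deteriorating as $h^{-1/2}$ (by Assumption \ref{2023AS-ass:f_g_properties}), ruining the recursion. The two-region decomposition together with the two-level Taylor expansion is the non-standard device that enables the required coupling, after which the recursion closes and discrete Gronwall concludes the proof.
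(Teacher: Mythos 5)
Your overall reduction is sound: the algebraic identity $Y_{n+1}^2\le A_n^2+2c_{-1}h$ is correct and bypasses the paper's $\varepsilon$-shift device; together with $(x+y)^{p/2}\le x^{p/2}+C(y\,x^{p/2-1}+y^{p/2})$ it cleanly reduces the lemma to $\E|A_n|^p\le (1+Ch)\E|Y_n|^p+Ch$, and your $p=2$ base case is complete. The bound $|f_h(Y_n)h|\lesssim h^{1/2}$ on the ``bad'' region and the use of Assumption \ref{2023AS-ass:monotone_coupling_condition} with $\gamma=(p-1)/2$ (forcing $p\le 2\gamma+1$) are also exactly the right ingredients. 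The route, however, is genuinely different from the paper's: the paper introduces the piecewise-continuous interpolation $\widetilde Z_t^{(n)}$ of \eqref{2023AS-eq:Ztn_continuous_definition}, applies It\^o's formula to $V_{\varepsilon,q}(x)=(\varepsilon+|x|^2)^q$, and then applies It\^o a second time to tame the cross-term $(\widetilde Z_s^{(n)}-Y_n)f_h(Y_n)$, all capped off with Gronwall.

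The gap is in the ``good-region, two-level Taylor expansion'' for $p>2$, and it is not merely a matter of missing detail. \textbf{(i)} You expand $|B_n|^p$ in $\Delta W_n$ around $Y_n$, but the increment $g_h(Y_n)\Delta W_n$ carries no small parameter: Assumption \ref{2023AS-ass:f_g_properties} only gives $|g_h(Y_n)|\le Ch^{-1/2}$, so $|g_h(Y_n)\Delta W_n|$ is $O(1)$, not $o(1)$, and a Taylor remainder in this variable is not controllable without a further case analysis on the size of $g_h(Y_n)\Delta W_n/Y_n$ that you do not perform. \textbf{(ii)} Even where the expansion is formally valid, the map $x\mapsto|x|^p$ has derivatives that blow up at $x=0$ for $2<p<4$, so the third/fourth-order remainders are not bounded by $C|Y_n|^{p-2}$ uniformly on the good region; the paper's use of $V_{\varepsilon,q}$ with $\varepsilon>1$ is precisely what removes this singularity. \textbf{(iii)} The indicator $\mathbbm{1}_{\{|f_h(Y_n)h|\le\frac12|B_n|\}}$ is $\sigma(\mathcal F_{t_n},\Delta W_n)$-measurable, not $\mathcal F_{t_n}$-measurable, so you cannot freely kill the odd $\Delta W_n$ moments inside $\E[\,\cdot\,\mathbbm{1}_{\text{good}}\mid\mathcal F_{t_n}]$; you need an extra argument (e.g.\ subtract off $\E[\cdots\mathbbm{1}_{\text{bad}}\mid\mathcal F_{t_n}]$ and bound it separately) which is not sketched. \textbf{(iv)} The cross product of the first-order $f_h$ term with the $O(g_h^2 h)$ correction in $\E[|B_n|^{p-2}B_n\mid\mathcal F_{t_n}]$ is a priori of size $h^{1/2}\cdot O(1)\cdot |Y_n|^{p-3}$, which is larger than $h$; closing it requires re-invoking Assumption \ref{2023AS-ass:monotone_coupling_condition} once more on the residual, which is exactly the extra layer of structure the paper extracts via its second It\^o expansion (the $D_1,D_2,D_3$ terms). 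In short, the coupling that you correctly identify as ``the main obstacle'' is only resolved at leading order; the residuals are not shown to be of the claimed forms $Ch|Y_n|^p$ or $Ch|Y_n|^{p-2}$, and this is precisely where the substantive work of the paper's proof lies.
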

}
\textbf{Proof:}
%Recalling \eqref{2023AS-eq:Gx}, 
To begin with, the proposed scheme \eqref{2023AS-eq:EUPE_scheme} can be rewritten as
\begin{equation}
    Z_{n+1}
    =
    Y_{n}
    +
    \big( - c_0 + c_1 Y_n + f_h(Y_n) \big) h
    +
    g_h(Y_n) \Delta W_n,
    \quad
    n = 0,1,...,N-1,
\end{equation}
where we denote 
\begin{equation}
    Z_{n+1} : = Y_{n+1} - c_{-1} h Y_{n+1}^{-1}.
\end{equation}
Next, we introduce a time-continuous version of $Z_{n+1}$ in each time interval $[t_n, t_{n+1}]$ as follows:
\begin{equation}\label{2023AS-eq:Ztn_continuous_definition}
    \begin{aligned}
        \widetilde Z_{t}^{(n)} 
        : = 
        Y_{n}
        +
        \big( - c_0 + c_1 Y_n + f_h(Y_n) \big) (t - t_n)
        +
        g_h(Y_{n}) (W_{t}-W_{t_{n}}),
        \quad
        t \in [t_n,t_{n+1}].
    \end{aligned}
\end{equation}
Note that $\widetilde Z_{t_n}^{(n)} = Y_n \neq Z_n = \widetilde Z_{t_n}^{(n-1)}$.
%Besides, for every integer $ k \geq 1 $ and time interval $[t_n, t_{n+1}]$, define the stopping time $\tau_{k}^{(n)}: \Omega \rightarrow [0,+\infty]$ as
%\begin{equation}
 %   \tau_{k}^{(n)}
%    : =
%    \inf
%    \{
%        s \in [t_{n},t_{n+1}]: \vert \widetilde Z_{s}^{(n)} \vert \geq k 
%    \}.
%\end{equation}
{\color{black}
Owing to \eqref{2023AS-eq:f_g_bounds}, 
a rough moment bound can be derived for $\widetilde Z_t^{(n)}$.
Indeed, by the Jensen inequality, 
for any $p \geq 1$, we have
\begin{align}\label{2023AS-eq:Zt_Jensen_rough_bound}
    \sup_{t_n \leq t \leq t_{n+1}}
    \E \bigg[ \Big(
        \varepsilon 
        + 
        \big\vert 
        \widetilde Z_t^{(n)} 
        \big\vert ^2 
    \Big)^p \bigg]
    \leq
    \hat C
    \E \Big[ 
        \big\vert 
        Y_n 
        \big\vert ^{2p}
    \Big]
    +
    \tilde C_{\varepsilon},
\end{align}
where $\varepsilon \geq 1 + 2 c_{-1} h > 1$ is a positive constant specified later and $\hat C$$, \tilde C_{\varepsilon}$ are positive constants independent of $h$.
{\color{black}
By noting $\varepsilon \geq 2 c_{-1} h$, we infer
\begin{align}
    \E \bigg[ \Big(
        \varepsilon 
        + 
        \big\vert 
        \widetilde Z_{t_{n+1}}^{(n)} 
        \big\vert ^2 
    \Big)^p \bigg]
    & =
    \E \bigg[ \Big(
        \varepsilon 
        + 
        \big\vert 
        Y_{n+1} - c_{-1} h Y_{n+1}^{-1} 
        \big\vert ^2 
    \Big)^p \bigg]
    \nonumber
    \\
   & =
    \E \bigg[ \Big(
        \varepsilon 
        + 
        \vert Y_{n+1} \vert^2
        +
        c_{-1}^2 h^2 \vert Y_{n+1} \vert^{-2}
        -
        2 c_{-1} h
    \Big)^p \bigg]
    \nonumber
    \\
   &  \geq
   \E \big[ 
        \big\vert 
        Y_{n+1}
        \big\vert ^{2p}
    \big]
    .
\end{align}
}
Therefore, we deduce from \eqref{2023AS-eq:Zt_Jensen_rough_bound} that
\begin{align}
    \E \big[ 
        \big\vert 
        Y_{n+1}
        \big\vert ^{2p}
    \big]
    \leq
    \hat C
    \E \big[ 
        \big\vert 
        Y_n 
        \big\vert ^{2p}
    \big]
    +
    \tilde C_{\varepsilon}.
\end{align}
By iteration, we get, for any $p \geq 1$ and $0 \leq n \leq N$,
{\color{black}
\begin{align} \label{2023As-eq:Y-rough-bound}
%    \sup_{0 \leq n \leq N}
    \E \big[ 
        \big\vert 
        Y_n
        \big\vert ^{2p}
    \big]
    \leq
    \hat C^{n} \E \big[ 
        \big\vert 
        Y_0
        \big\vert ^{2p}
    \big]
    +
    \tilde C_{\varepsilon}
    \sum_{i=0}^{n-1}  \hat C^i
    <
    \infty,
\end{align}
}
which, together with \eqref{2023AS-eq:Zt_Jensen_rough_bound}, infers that,
for any $p \geq 1$,
\begin{align}\label{2023AS-eq:rough_bound}
    \sup_{0 \leq n \leq N-1}
    \sup_{t_n \leq t \leq t_{n+1}}
    \E \big[ \big(
        \varepsilon 
        + 
        \big\vert 
        \widetilde Z_t^{(n)} 
        \big\vert ^2 
    \big)^p \big]
    < \infty.
\end{align}
We emphasize that the above rough moment bound, depending on $N$, 
is not proved to be uniformly bounded with respect to $N$. Nevertheless, it suffices to allow the use of the Gronwall lemma
later to arrive at uniform moment bounds.
%without introducing a stopping time.
%
%Back to \eqref{2023AS-eq:Ztn_continuous_definition},
To do this, we denote 
\begin{equation}\label{2023AS-eq:V_definition}
    V_{\varepsilon,q}(x) 
    : = 
    \big( 
     \varepsilon
    +
    \vert x \vert ^2
    \big)^q,
    \quad
    q,x \in \R,
\end{equation}
where we recall
$\varepsilon \geq 1 + 2 c_{-1}h$.
It can be easily checked that for any $q,q' \in \R$,
\begin{equation}\label{2023AS-eq:V_qq'_property}
    V_{\varepsilon,q}(x) \cdot V_{\varepsilon,q'}(x)
    =
    V_{\varepsilon, q+q'}(x),
    \quad
    \big( V_{\varepsilon,q}(x) \big)^{q'} 
    = 
    V_{\varepsilon, q \cdot q'}(x).
\end{equation}
Since $\varepsilon \geq 1 + 2 c_{-1}h > 1$,
one can directly check that
%\begin{equation}\label{2023AS-eq:V_q_property}
%    \begin{cases}
%    V_{\varepsilon,q}(x) > 1, \quad q > 0\\
%    V_{\varepsilon,q}(x) = 1, \quad q = 0\\
%    V_{\varepsilon,q}(x) < 1, \quad q < 0
%    \end{cases}
%\end{equation}
%and that
\begin{equation}\label{2023AS-eq:V_q<q'}
    V_{\varepsilon, q} \leq V_{\varepsilon,q'},
    \quad
    q \leq q'.
\end{equation}
Also, elementary calculations, for $q \neq 0$, show
\begin{equation}\label{2023AS-eq:V_derivates}
\begin{aligned}
    V_{\varepsilon,q}'(x) 
    & = 
    2 q x 
    \big( 
        \varepsilon 
        +
        \vert x \vert ^2
    \big)^{q-1}
    =
    2 q x V_{\varepsilon,q-1}(x),\\
    V_{\varepsilon,q}''(x)
    & =
    2 q \big( 
        \varepsilon 
        +
        \vert x \vert ^2
    \big)^{q-2}
    \big[
        \varepsilon 
        +
        (2q - 1) \vert x \vert ^2
    \big]
    & \leq
    2q \cdot 
    \max{\{ 1, 2q-1 \}} 
    \cdot 
    V_{\varepsilon, q-1}(x)
\end{aligned}
\end{equation}
%for any $q \in \R$, and
%\begin{equation}
%\begin{cases}
%    V_{\varepsilon,q}''(x)
%    <
%    2q (2q - 1) \big( \varepsilon + \vert x \vert^2 \big)^{q - 1}
%    =
%    2q (2q - 1) V_{\varepsilon,q-1}(x),
%    \quad
%    & q > 1\\
%%--------------------------------------------
%    V_{\varepsilon,q}''(x)
%    \leq
%    2q \big( \varepsilon + \vert x \vert^2 \big)^{q - 1}
%    =
%    2q V_{\varepsilon,q-1}(x)
%    < 
%    2 q,
%    \quad
%    & q \leq 1
%\end{cases}
%\end{equation}
%Besides, for any $q \in \R$ it holds that
and that
\begin{equation}\label{2023AS-eq:xV_leq}
    x
    \cdot
    V_{\varepsilon,q}(x)
    \leq
    \big( \varepsilon + \vert x \vert^2 \big)^{\frac12}
    \cdot
    V_{\varepsilon,q}(x)
    =
    V_{\varepsilon,\frac12}(x)
    \cdot
    V_{\varepsilon,q}(x)
    =
    V_{\varepsilon,q+\frac12}(x).
\end{equation}
Equipped with these properties, we can start the estimation of $\widetilde Z_{t}^{(n)}$.
The It\^{o} formula together with \eqref{2023AS-eq:V_derivates} and \eqref{2023AS-eq:xV_leq} yields 
{\color{black}that for any $q \geq 1$,}
%that for any $1 \leq q \leq \gamma + \half$, 
\begin{equation}\label{2023AS-eq:Vq_Ito}
\begin{aligned}
    V_{\varepsilon,q}( \widetilde Z_t^{(n)})
    & = 
    V_{\varepsilon,q}( \widetilde Z_{t_n}^{(n)})
    +
    \int_{t_n}^t
    \bigg[
    V_{\varepsilon,q}'( \widetilde Z_s^{(n)} )
    \big( - c_0 + c_1 Y_n + f_h(Y_n) \big)
    +
    \half V_{\varepsilon,q}''( \widetilde Z_s^{(n)} )
    \vert g_h(Y_n) \vert^2
    \bigg]
    \, \dd s\\
%%----------------------------------------------------------
    & \quad 
    +
    \int_{t_n}^t
    V_{\varepsilon,q}'( \widetilde Z_s^{(n)}) g_h(Y_n)
    \, \dd W_s\\
%%============================================================
    & \leq
    V_{\varepsilon,q}( \widetilde Z_{t_n}^{(n)})
    +
    \int_{t_n}^t
    \bigg[
    2 q \widetilde Z_s^{(n)}  
    V_{\varepsilon,q-1}(\widetilde Z_{s}^{(n)})
    \big( -c_0 + c_1 Y_n + f_h(Y_n) \big)\\
%%----------------------------------------------------------
    & \quad 
    +
    q (2q - 1) 
    V_{\varepsilon,q-1}(\widetilde Z_{s}^{(n)})
    \vert g_h(Y_n) \vert^2
    \bigg]
    \, \dd s
    +
    \int_{t_n}^t
    2 q \widetilde Z_s^{(n)}  
    V_{\varepsilon,q-1}(\widetilde Z_{s}^{(n)})
    g_h(Y_n)
    \, \dd W_s\\
%%============================================================
    & \leq
    V_{\varepsilon,q}(\widetilde Z_{t_n}^{(n)})
    +
    \int_{t_n}^t
    2 q 
    V_{\varepsilon,q-\frac12}(\widetilde Z_{s}^{(n)})
    \cdot
    \big\vert -c_0 + c_1 Y_n \big\vert
    \, \dd s\\
%%----------------------------------------------------------
    & \quad 
    +
    \int_{t_n}^t
    2 q 
    V_{\varepsilon,q-1}(\widetilde Z_{s}^{(n)})
    \Big[ \big\langle Y_n, f_h(Y_n) \big\rangle + 
    \tfrac{2q - 1}{2} \vert g_h(Y_n) \vert^2 \Big]
    \, \dd s\\
%%----------------------------------------------------------
    & \quad 
    +
    \int_{t_n}^t
    2 q 
    V_{\varepsilon,q-1}(\widetilde Z_{s}^{(n)})
    \big( \widetilde Z_{s}^{(n)} - Y_n \big) f_h(Y_n)
    \, \dd s\\
%%----------------------------------------------------------
    & \quad 
    +
    \int_{t_n}^t
        2 q \widetilde Z_s^{(n)}  
        V_{\varepsilon,q-1}(\widetilde Z_{s}^{(n)})
        g_h(Y_n)
    \, \dd W_s\\
%%============================================================
    & =
    V_{\varepsilon,q}(\widetilde Z_{t_n}^{(n)})
    +
    \int_{t_n}^t
    2 q 
    V_{\varepsilon,q-\frac12}(\widetilde Z_{s}^{(n)})
    \cdot
    \big\vert -c_0 + c_1 Y_n \big\vert
    \, \dd s\\
%%----------------------------------------------------------
    & \quad 
    +
    \int_{t_n}^t
    2 q 
    V_{\varepsilon,q-1}(\widetilde Z_{s}^{(n)})
    \Big[ \big\langle Y_n, f_h(Y_n) \big\rangle + 
    \tfrac{2q - 1}{2} \vert g_h(Y_n) \vert^2 \Big]
    \, \dd s\\
%%----------------------------------------------------------
    & \quad 
    +
    \int_{t_n}^t
    2 q 
    V_{\varepsilon,q-1}(\widetilde Z_{s}^{(n)})
    \int_{t_n}^s 
        f_h(Y_n)
        \big( - c_0 + c_1 Y_n + f_h(Y_n) \big) 
    \, \dd r
    \, \dd s\\
%%----------------------------------------------------------
    & \quad 
    +
    \int_{t_n}^t
    2 q
    V_{\varepsilon,q-1}(\widetilde Z_{s}^{(n)})
    \int_{t_n}^s 
        f_h(Y_n)
        g_h(Y_n)
    \, \dd W_r
    \, \dd s\\
%%----------------------------------------------------------
    & \quad 
    +
    \int_{t_n}^t
        2 q \widetilde Z_s^{(n)}  
        V_{\varepsilon,q-1}(\widetilde Z_{s}^{(n)})
        g_h(Y_n)
    \, \dd W_s,
\end{aligned}
\end{equation}
where we also used \eqref{2023AS-eq:Ztn_continuous_definition} in the last step.
}
%Notice that $\widetilde Z_{t_n}^{(n)} = Y_n$.
%Notice that 
%{\color{black} \eqref{2023AS-eq:monotone_coupling_condition}}
%together with \eqref{2023AS-eq:f_g_bounds} indicates that there exist a constant $L$ such that
{\color{black}
In view of \eqref{2023AS-eq:monotone_coupling_condition}, for any $q \leq \gamma + \half$, 
{\color{black}where $\gamma$ is assumed to be arbitrarily large for the case $\kappa + 1 > 2 \rho$ and $\tfrac12 \leq \gamma \leq \tfrac{c_2}{c_3^2}$ for the critical case $\kappa + 1 = 2 \rho$,}
one deduces that
\begin{align}\label{2023AS-eq:coupled_monotone_2q-1}
    \big\langle Y_n, \, f_h(Y_n) \big\rangle
        +
        \tfrac{2q-1}{2} \vert g_h(Y_n) \vert^2
    \leq L,
\end{align}
which, together with \eqref{2023AS-eq:f_g_bounds}, further implies that
\begin{equation}
\begin{aligned}
    f_h(Y_n) \big( - c_0 + c_1 Y_n + f_h(Y_n) \big) 
    & \leq
    -c_0 f_h(Y_n) + c_1 L + \vert f_h(Y_n) \vert^2
    \\
    & \leq
    C (1 + \vert f_h(Y_n) \vert^2 )
    \\ & 
    \leq
    C(1 + h^{-1}).
\end{aligned}
\end{equation}
%Accordingly we have
Taking these estimates into account, one can derive from \eqref{2023AS-eq:V_q<q'} and \eqref{2023AS-eq:Vq_Ito} that 
\begin{equation}
\begin{aligned}
    V_{\varepsilon,q}( \widetilde Z_t^{(n)})
    & \leq
    V_{\varepsilon,q}(\widetilde Z_{t_n}^{(n)})
    +
    \int_{t_n}^t
    2 q c_0
    V_{\varepsilon,q}(\widetilde Z_{s}^{(n)})
    \, \dd s
    +
    \int_{t_n}^t
    2 q c_1
    V_{\varepsilon,q - \half}(\widetilde Z_{s}^{(n)})
    \cdot
    Y_n
    \, \dd s\\
%%----------------------------------------------------------
    & \quad 
    +
    \int_{t_n}^t
    2 q 
    V_{\varepsilon,q-1}(\widetilde Z_{s}^{(n)})
    ( L+C )
    \, \dd s\\
%%----------------------------------------------------------
    & \quad 
    +
    \int_{t_n}^t
    2 q
    V_{\varepsilon,q-1}(\widetilde Z_{s}^{(n)})
    \int_{t_n}^s 
        f_h(Y_n)
        g_h(Y_n)
    \, \dd W_r
    \, \dd s\\
%%----------------------------------------------------------
    & \quad 
    +
    \int_{t_n}^t
        2 q \widetilde Z_s^{(n)}  
        V_{\varepsilon,q-1}(\widetilde Z_{s}^{(n)})
        g_h(Y_n)
    \, \dd W_s.
\end{aligned}
\end{equation}
Applying the Young inequality twice leads to
\begin{equation}\label{2023AS-eq:Vq_after_Young}
\begin{aligned}
    V_{\varepsilon,q}(\widetilde Z_{t}^{(n)})
    & \leq
    V_{\varepsilon,q}(\widetilde Z_{t_n}^{(n)})
    +
    \int_{t_n}^t
    2 q c_0
    V_{\varepsilon,q}(\widetilde Z_{s}^{(n)})
    \, \dd s
    +
    \int_{t_n}^t
        c_1 (2q-1) 
        V_{\varepsilon,q}(\widetilde Z_{s}^{(n)})
    \, \dd s
    +
    \int_{t_n}^t
        c_1 \vert Y_n \vert ^{2q}
    \, \dd s\\
%%----------------------------------------------------------
    & \quad 
    +
    \int_{t_n}^t
    (2q-2) 
    V_{\varepsilon,q}(\widetilde Z_{s}^{(n)})
    \, \dd s
    +
    \int_{t_n}^t
        2 (L + C)^q
    \, \dd s\\
%%----------------------------------------------------------
    & \quad 
    +
    \int_{t_n}^t
        2q 
        V_{\varepsilon,q-1}(\widetilde Z_{s}^{(n)})
        \int_{t_n}^s f_h(Y_n) g_h(Y_n) \, \dd W_r
        \, \dd s\\
%%----------------------------------------------------------
    & \quad 
    +
    \int_{t_n}^t
    2q 
    \widetilde Z_s^{(n)}
    V_{\varepsilon,q-1}(\widetilde Z_{s}^{(n)})
    g_h(Y_n)
    \, \dd W_s.
\end{aligned}
\end{equation}
{\color{black}Recalling \eqref{2023AS-eq:Ztn_continuous_definition} and \eqref{2023AS-eq:V_definition}, one knows that the process 
$\widetilde Z_s^{(n)}$ is $\mathcal{F}_{s}$-measurable,
$Y_n$
is $\mathcal{F}_{t_n}$-measurable by construction
and both are integrable in view of \eqref{2023AS-ass:f_g_properties} and \eqref{2023AS-eq:rough_bound}. This ensures that the following expectation of the It\^{o} stochastic integral vanishes:}
%Observing that
\begin{equation}\label{2023AS-eq:expectation_vanish}
    \E \Bigg[
        \int_{t_n}^t
        2q 
        \widetilde Z_s^{(n)}
        V_{\varepsilon,q-1}(\widetilde Z_{s}^{(n)})
        g_h(Y_n)
        \, \dd W_s
    \Bigg]
    =0.
\end{equation}
{\color{black}
By observing that
\begin{equation}
    \vert Y_n \vert ^{2q}
    \leq
    \big( \varepsilon + \vert Y_n \vert^2 \big)^q
    =
    V_{\varepsilon,q}(Y_n)
    =
    V_{\varepsilon,q}(\widetilde Z_{t_n}^{(n)}),
\end{equation}
}
we take expectations on both sides of \eqref{2023AS-eq:Vq_after_Young} to show
\begin{equation}\label{2023AS-eq:Zt_before_Ito_on_Z_s^2p-2}
\begin{aligned}
    \E \Big[ 
        V_{\varepsilon,q}(\widetilde Z_{t}^{(n)}) 
    \Big]
    & \leq
    (1 + c_1 h)\E \Big[ 
        V_{\varepsilon,q}(\widetilde Z_{t_n}^{(n)})
    \Big]
    +
    (2 q (c_0+c_1+1) - c_1 - 2) 
    \E \bigg[ 
        \int_{t_n}^t
        V_{\varepsilon,q}(\widetilde Z_{s}^{(n)})
        \, \dd s
    \bigg]
    +
    C h\\
%%----------------------------------------------------------
    & \quad 
    +
    \E \bigg[
        \int_{t_n}^t
            2q 
            V_{\varepsilon,q-1}(\widetilde Z_{s}^{(n)})
            \int_{t_n}^s f_h(Y_n) g_h(Y_n) \, \dd W_r
        \, \dd s
    \bigg].
\end{aligned}
\end{equation}
}
%%========================DELETE==========================
\iffalse
If $q = 1$ we can directly deduce that
\begin{equation}\label{2023AS-eq:Zt_before_Ito_p=1}
\begin{aligned}
    \E \Big[ 
        \big\vert \widetilde Z_{t }^{(n)} \big\vert ^{2q} 
    \Big]
    & \leq
    (1 + C h)\E \Big[ \big\vert \widetilde Z_{t_n}^{(n)} \big\vert^{2q} \Big]
    +
    C \cdot
    \E \bigg[ 
        \int_{t_n}^t
        \big\vert \widetilde Z_s^{(n)} \big\vert ^{2q}
        \, \dd s
    \bigg]
    +
    C h.
\end{aligned}
\end{equation}
\fi
%Otherwise, when $q \geq 2$, 
%%==================================END======================
The case $q = 1$ directly results in 
\begin{equation}
\begin{aligned}
    \E \Big[ 
        V_{\varepsilon,1}(\widetilde Z_{t}^{(n)}) 
    \Big]
    & \leq
    (1 + c_1 h)\E \Big[ 
        V_{\varepsilon,1}(\widetilde Z_{t_n}^{(n)})
    \Big]
    +
    {\color{black}(2c_0 + c_1)} \cdot
    \E \bigg[ 
        \int_{t_n}^t
        V_{\varepsilon,1}(\widetilde Z_{s}^{(n)})
        \, \dd s
    \bigg]
    +
    C h.
\end{aligned}
\end{equation}
Now, we focus on the case $q > 1$.
Applying the It\^{o} formula again on 
$
V_{\varepsilon,q-1}(\widetilde Z_s^{(n)}),
$ 
we get
{\color{black}
\begin{equation}\label{2023AS-eq:Ito_on_Zs^2p-2}
\begin{aligned}
    V_{\varepsilon,q-1}(\widetilde Z_s^{(n)})
    & =
    V_{\varepsilon,q-1}(\widetilde Z_{t_n}^{(n)})
    +
    \int_{t_n}^s
        V_{\varepsilon, q-1}'(\widetilde Z_r^{(n)})
        \cdot
        \big( - c_0 + c_1 Y_n \big)
    \, \dd r\\
%%----------------------------------------------------------
    & \quad 
    +
    \int_{t_n}^s
    \bigg[
        V_{\varepsilon, q-1}'(\widetilde Z_r^{(n)})
        f_h(Y_n)
        + 
        \half V_{\varepsilon, q-1}''(\widetilde Z_r^{(n)})
        \vert g_h(Y_n) \vert^2
    \bigg]
    \, \dd r\\
%%----------------------------------------------------------
    & \quad +
    \int_{t_n}^s
        V_{\varepsilon, q-1}'(\widetilde Z_r^{(n)})
        g_h(Y_n) 
    \, \dd W_r.
\end{aligned}
\end{equation}
{\color{black}Similar to \eqref{2023AS-eq:expectation_vanish}, we note that
\begin{equation}
\begin{aligned}
    \E \bigg[
        \int_{t_n}^t
            2q 
            V_{\varepsilon,q-1}(\widetilde Z_{t_n}^{(n)})
            \int_{t_n}^s f_h(Y_n) g_h(Y_n) \, \dd W_r
        \, \dd s
    \bigg]
    & =
    2q 
        \int_{t_n}^t 
            \E \bigg[
            \int_{t_n}^s V_{\varepsilon,q-1}(\widetilde Z_{t_n}^{(n)})
            f_h(Y_n) g_h(Y_n) \, \dd W_r
                \bigg]
        \, \dd s
    \\
    &
    =
    0.
\end{aligned}
\end{equation}
}
Taking this into account and plugging \eqref{2023AS-eq:Ito_on_Zs^2p-2} into \eqref{2023AS-eq:Zt_before_Ito_on_Z_s^2p-2},
we have
\begin{equation}
\begin{aligned}
    & \E \Big[ 
        V_{\varepsilon,q}(\widetilde Z_{t}^{(n)})
    \Big]\\
%%============================================
    & \leq
    (1 + c_1 h)\E \Big[ 
        V_{\varepsilon,q}(\widetilde Z_{t_n}^{(n)})
    \Big]
    +
    (2 q (c_0+c_1+1) - c_1 - 2) \cdot
    \E \bigg[ 
        \int_{t_n}^t
        V_{\varepsilon,q}(\widetilde Z_{s}^{(n)})
        \, \dd s
    \bigg]
    +
    C h\\
%%----------------------------------------------------------
    & \quad 
    +
    \underbrace{
    \E \bigg[
        \int_{t_n}^t
            2q 
            \int_{t_n}^s
                V_{\varepsilon, q-1}'(\widetilde Z_r^{(n)})
                \cdot
                \big( - c_0 + c_1 Y_n \big)
            \, \dd r
            \int_{t_n}^s f_h(Y_n) g_h(Y_n) \, \dd W_r
        \, \dd s
    \bigg]
    }_{=: D_1}\\
%%----------------------------------------------------------
    & \quad 
    +
    \underbrace{
    \E \bigg[
        \int_{t_n}^t
            2q 
            \int_{t_n}^s
            \bigg[
                V_{\varepsilon, q-1}'(\widetilde Z_r^{(n)})
                f_h(Y_n)
                + 
                \half V_{\varepsilon, q-1}''(\widetilde Z_r^{(n)})
                \vert g_h(Y_n) \vert^2
            \bigg]
            \, \dd r
            \int_{t_n}^s f_h(Y_n) g_h(Y_n) \, \dd W_r
        \, \dd s
    \bigg]
    }_{=: D_2}\\
%%----------------------------------------------------------
    & \quad 
    +
    \underbrace{
    \E \bigg[
        \int_{t_n}^t
            2q 
            \int_{t_n}^s
                V_{\varepsilon, q-1}'(\widetilde Z_r^{(n)})
                g_h(Y_n) 
            \, \dd W_r
            \int_{t_n}^s f_h(Y_n) g_h(Y_n) \, \dd W_r
        \, \dd s
    \bigg].
    }_{=: D_3}
\end{aligned}
\end{equation}
}
{\color{black}
Utilizing the Young inequality of the form
\begin{equation*}
    a \cdot b \leq \tfrac{1}{2q} a^{2q} + \tfrac{2q-1}{2q} b^{\frac{2q}{2q-1}}
    =
    \tfrac{1}{2q}\Big(a^{2q} + (2q-1) b^{\frac{2q}{2q-1}} \Big),
\end{equation*}
we first derive
\begin{equation}\label{2023AS-eq:D1-early}
\begin{aligned}
    D_1
    & =
    4q(q-1) \E \bigg[ 
        \int_{t_n}^t\
            \int_{t_n}^s
                \widetilde Z_r^{(n)}
                V_{\varepsilon,q-2}(\widetilde Z_r^{(n)})
                \cdot
                \big( - c_0 + c_1 Y_n \big)
            \, \dd r
            \int_{t_n}^s f_h(Y_n) g_h(Y_n) \, \dd W_r
        \, \dd s
    \bigg]
    \\
%%====================================================
    & \leq
    2(q-1) \E \bigg[ 
        \int_{t_n}^t
            \big\vert - c_0 + c_1 Y_n \big\vert^{2q}
        \, \dd s
    \bigg]\\
    & \quad
    +
    2(q-1)(2q-1) \E \bigg[ 
        \int_{t_n}^t
            \Big\vert
                \int_{t_n}^s
                    \widetilde Z_r^{(n)}
                    V_{\varepsilon,q-2}(\widetilde Z_r^{(n)})
                \, \dd r
                \int_{t_n}^s
                    f_h(Y_n) g_h(Y_n)
                \, \dd W_r
            \Big\vert^{\frac{2q}{2q - 1}}
        \, \dd s
    \bigg].
\end{aligned}
\end{equation}
Further, applying the Young inequality of another form
\begin{align*}
    a \cdot b \leq \tfrac{2q-2}{2q-1} a^{\frac{2q-1}{2q-2}}
    +
    \tfrac{1}{2q-1} b^{2q-1}
    =
    \tfrac{1}{2q-1} \big( (2q-2)a^{\frac{2q-1}{2q-2}}
    + b^{2q-1} \big),
\end{align*}
we get
\begin{equation}
\begin{aligned}
    D_1 
    & \leq
    2(q-1) \E \bigg[ 
        \int_{t_n}^t
            \big\vert - c_0 + c_1 Y_n \big\vert^{2q}
        \, \dd s
    \bigg]\\
    & \quad
    +
    2(q-1)(2q-2) \E \bigg[ 
        \int_{t_n}^t
            \Big\vert
                h^{-1}
                \int_{t_n}^s
                    \widetilde Z_r^{(n)}
                    V_{\varepsilon,q-2}(\widetilde Z_r^{(n)})
                \, \dd r
            \Big\vert^{\frac{2q}{2q - 2}}
        \, \dd s
    \bigg]\\
    & \quad
    +
    2(q-1) \E \bigg[ 
        \int_{t_n}^t
            \Big\vert
                h
                \int_{t_n}^s
                    f_h(Y_n) g_h(Y_n)
                \, \dd W_r
            \Big\vert^{2q}
        \, \dd s
    \bigg].
\end{aligned}
\end{equation}
For the first term, we can directly deduce that
\begin{equation*}
    \vert - c_0 + c_1 Y_n \vert ^{2q}
    \leq
    C
    \big( \varepsilon + \vert Y_n \vert^2 \big)^q
    =
    C V_{\varepsilon,q}(Y_n)
    =
    C V_{\varepsilon,q}(\widetilde Z_{t_n}^{(n)}).
\end{equation*}
For the second term, utilizing \eqref{2023AS-eq:V_q<q'}, \eqref{2023AS-eq:xV_leq} and the H\"{o}lder inequality, we get
\begin{align*}
    & \E \bigg[ 
        \int_{t_n}^t
            \Big\vert
                h^{-1}
                \int_{t_n}^s
                    \widetilde Z_r^{(n)}
                    V_{\varepsilon,q-2}(\widetilde Z_r^{(n)})
                \, \dd r
            \Big\vert^{\frac{2q}{2q - 2}}
        \, \dd s
    \bigg]\\
    & \leq
    \int_{t_n}^t
    \E \bigg[ 
            \Big\vert
                h^{-1}
                \int_{t_n}^s
                    V_{\varepsilon,q-1}(\widetilde Z_r^{(n)})
                \, \dd r
            \Big\vert^{\frac{2q}{2q - 2}}
    \bigg]
    \, \dd s\\
    & \leq
    \int_{t_n}^t
    h^{\frac{q}{q-1} - 1}
    \int_{t_n}^s
    \E\Big[ \big\vert 
        h^{-1}
                    V_{\varepsilon,q-1}(\widetilde Z_r^{(n)})
    \big\vert^{\frac{q}{q-1}} \Big]
    \dd r
    \dd s\\
    & \leq
    \int_{t_n}^t
    \sup_{t_n \leq r \leq s}
    \E\Big[ 
        V_{\varepsilon,q}(\widetilde Z_r^{(n)})
    \Big]
    \dd s.
\end{align*}
For the third term, the moment inequality \cite[Theorem 7.1]{mao2007stochastic} together with \eqref{2023AS-eq:f_g_bounds} directly infers that
\begin{align*}
    \E \bigg[ 
        \int_{t_n}^t
            \Big\vert
                h
                \int_{t_n}^s
                    f_h(Y_n) g_h(Y_n)
                \, \dd W_r
            \Big\vert^{2q}
        \, \dd s
    \bigg]
    \leq
    C h^{q+1}.
\end{align*}
Thus, in summary, we have
\begin{equation}
\begin{aligned}
    D_1 
    \leq
    C h
    \E \Big[ 
        V_{\varepsilon,q}(\widetilde Z_{t_n}^{(n)})
    \Big]
    +
    (2q-2)^2 
    \int_{t_n}^t
        \sup_{t_n \leq r \leq s}
        \E \Big[ 
            V_{\varepsilon,q}(\widetilde Z_r^{(n)}) 
        \Big]
    \dd s
    + 
    C h^{q+1}.
\end{aligned}
\end{equation}
}
%where we used \eqref{2023AS-eq:f_g_bounds}, \eqref{2023AS-eq:V_q<q'}, \eqref{2023AS-eq:xV_leq}, the H\"{o}lder inequality and the moment inequality in the last step.
Since the estimation of $D_3$ is similar and easy, 
we present it before treating $D_2$.
By the generalized It\^{o} isometry \cite[Chapter 4, B]{evans2006introduction}, the Young inequality as well as \eqref{2023AS-eq:f_g_bounds}, \eqref{2023AS-eq:V_q<q'}, \eqref{2023AS-eq:xV_leq}, the H\"{o}lder inequality, and the moment inequality {\color{black}\cite[Theorem 7.1]{mao2007stochastic}}, we deduce that
\begin{equation}\label{2023AS-eq:D4}
\begin{aligned}
    D_3
    & =
    4q(q-1)
    \E \bigg[
        \int_{t_n}^t
            \int_{t_n}^s
                \widetilde Z_r^{(n)}
                V_{\varepsilon,q-2}(\widetilde Z_{r}^{(n)})
                f_h(Y_n) \vert g_h(Y_n) \vert^2 
            \, \dd r
        \, \dd s
    \bigg]\\
%%==================================================
    & \leq
    4(q-1)
    \E \bigg[
        \int_{t_n}^t
        \bigg(
        (q-1)
        \Big\vert
            h^{-1}
            \int_{t_n}^s
                V_{\varepsilon,q-1}(\widetilde Z_{r}^{(n)})
            \, \dd r
        \Big\vert^{\frac{q}{q-1}}    
        +
        \Big\vert
            h
            \int_{t_n}^s f_h(Y_n) \vert g_h(Y_n) \vert^2 \, \dd r
        \Big\vert^{q}
        \bigg)
        \, \dd s
    \bigg]\\
%%----------------------------------------------------
    & \leq
    (2q-2)^2
    \int_{t_n}^t
        \sup_{t_n \leq r \leq s}
        \E \Big[ 
            V_{\varepsilon,q}(\widetilde Z_{r}^{(n)}) 
        \Big]
    \dd s
    + 
    C h^{\frac{q}{2} + 1}.
\end{aligned}
\end{equation}
As for $D_2$, noting
$    \max {\{1, 2q - 3\}}
    <
    2q - 1$
and using \eqref{2023AS-eq:V_derivates},
one can easily show that for $q > 1$,
\begin{equation}
\begin{aligned}
    D_2
    & \leq
    2q 
    \E \Bigg[
        \int_{t_n}^t
            \bigg\vert
            \int_{t_n}^s
            \Big(
                V_{\varepsilon, q-1}'(\widetilde Z_r^{(n)})
                f_h(Y_n)
                + 
                \half V_{\varepsilon, q-1}''(\widetilde Z_r^{(n)})
                \vert g_h(Y_n) \vert^2
            \Big)
            \, \dd r
            \bigg\vert
            \cdot
            \bigg\vert
            \int_{t_n}^s f_h(Y_n) g_h(Y_n) \, \dd W_r
            \bigg\vert
        \, \dd s
    \Bigg]\\
%%====================================================
    & \leq
    2q 
    \E \Bigg[
        \int_{t_n}^t
            \bigg\vert
            \int_{t_n}^s
            \Big(
                2(q-1) 
                \widetilde Z_r^{(n)}
                V_{\varepsilon,q-2}(\widetilde Z_r^{(n)})
                f_h(Y_n)\\
%%--------------------------------------------------
            & \qquad
                + 
                (q-1) \max {\{1, 2q - 3 \}}
                V_{\varepsilon,q-2}(\widetilde Z_r^{(n)})
                \vert g_h(Y_n) \vert^2
            \Big)
            \, \dd r
            \bigg\vert
            \cdot
            \bigg\vert
            \int_{t_n}^s f_h(Y_n) g_h(Y_n) \, \dd W_r
            \bigg\vert
        \, \dd s
    \Bigg]\\
%%====================================================
    & <
    4q (q-1)
    \E \Bigg[
        \int_{t_n}^t
            \bigg\vert
            \int_{t_n}^s
                V_{\varepsilon,q-2}(\widetilde Z_r^{(n)})
                \Big[
                    \big\langle Y_n,f_h(Y_n) \big\rangle
                    + 
                    \tfrac{2q-1}{2}
                    \vert g_h(Y_n) \vert^2
                \Big]
            \, \dd r
            \bigg\vert\\
%%--------------------------------------------------
            & \quad
            \times
            \bigg\vert
            \int_{t_n}^s f_h(Y_n) g_h(Y_n) \, \dd W_r
            \bigg\vert
        \, \dd s
    \Bigg]\\
%%--------------------------------------------------
    & \quad +
    4q (q-1)
    \E \Bigg[
        \int_{t_n}^t
            \bigg\vert
            \int_{t_n}^s
                V_{\varepsilon,q-2}(\widetilde Z_r^{(n)})
                \big( \widetilde Z_r^{(n)} - Y_n \big)
                f_h(Y_n)
            \, \dd r
            \bigg\vert
            \cdot
            \bigg\vert
            \int_{t_n}^s f_h(Y_n) g_h(Y_n) \, \dd W_r
            \bigg\vert
        \, \dd s
    \Bigg]\\
%=================================================
    & = :
    D_{21} + D_{22}.
\end{aligned}
\end{equation}
%Noticing that $ q - \tfrac32 < \gamma$,  
%{\color{black}
%the inequality \eqref{2023AS-eq:monotone_coupling_condition}
%}indicates that there exists a constant $L$ such that
%\begin{equation}
%    \langle Y_n, f_h(Y_n) \rangle 
%    + 
%    \tfrac{2q - 3}{2}  \vert g_h(Y_n) \vert^2
%    \leq 
%    L.
%\end{equation}
Employing the Young inequality, \eqref{2023AS-eq:V_q<q'}, \eqref{2023AS-eq:coupled_monotone_2q-1} on $D_{21}$ as well as the H\"{o}lder inequality and the moment inequality {\color{black}\cite[Theorem 7.1]{mao2007stochastic}}, we get
\begin{equation}\label{2023AS-eq:D2}
\begin{aligned}
    D_{21}
    & \leq
    4L q (q-1) 
    \E \Bigg[
        \int_{t_n}^t
        \bigg(
        \Big\vert
            h^{-1}
            \int_{t_n}^s
                V_{\varepsilon,q-1}( \widetilde Z_r^{(n)})
            \, \dd r
        \Big\vert
        \cdot
        \Big\vert
            h
            \int_{t_n}^s f_h(Y_n) g_h(Y_n) \, \dd W_r
        \Big\vert
        \bigg)
        \, \dd s
    \Bigg]\\
%%========================================================
    & \leq
    4 L (q-1)
    \E \Bigg[
        \int_{t_n}^t
        \bigg(
        (q-1)
        \Big\vert
            h^{-1}
            \int_{t_n}^s
                V_{\varepsilon,q-1}( \widetilde Z_r^{(n)})
            \, \dd r
        \Big\vert^{\frac{q}{q - 1}}
        +
        \Big\vert
            h
            \int_{t_n}^s f_h(Y_n) g_h(Y_n) \, \dd W_r
        \Big\vert^{q}
        \bigg)
        \, \dd s
    \Bigg]\\
%%----------------------------------------------------
    & \leq
    L (2q-2)^2
    \int_{t_n}^t
        \sup_{t_n \leq r \leq s}
        \E \Big[ 
            V_{\varepsilon,q}(\widetilde Z_{r}^{(n)}) 
        \Big]
    \dd s
    + 
    C h^{\frac{q}{2}+1}.
\end{aligned}
\end{equation}
%The estimation of $D_2$ is more delicate.
%Focus on the case $p > 2$ first.
With regard to $D_{22}$,
using the Young inequality twice again as well as the H\"{o}lder inequality, the moment inequality, \eqref{2023AS-eq:f_g_bounds}, \eqref{2023AS-eq:Ztn_continuous_definition}, and \eqref{2023AS-eq:V_q<q'} on $D_{22}$, we deduce that
\begin{equation}\label{2023AS-eq:D_22-early}
\begin{aligned}
    D_{22}
    & \leq
    2(q-1)
    \E \Bigg[
        \int_{t_n}^t
        \bigg(
        (2q-1)
        \Big\vert
            h^{-1}
            \int_{t_n}^s
                V_{\varepsilon,q-2}(\widetilde Z_r^{(n)})
                ( \widetilde Z_r^{(n)} - Y_n ) 
            \, \dd r
        \Big\vert^{\frac{2q}{2q - 1}}\\
%%----------------------------------------------------
    & \quad
        +
        \Big\vert
            h
            \int_{t_n}^s f_h(Y_n)^2 g_h(Y_n) \, \dd W_r
        \Big\vert^{2q}
        \bigg)
        \, \dd s
    \Bigg]\\
%%----------------------------------------------------
    & \leq
    \tfrac{2q-2}{h}
    \int_{t_n}^t
    \int_{t_n}^s
    (2q-1)
    \E \bigg[
        \big\vert V_{\varepsilon,q-2}(\widetilde Z_r^{(n)}) \big\vert^{\frac{2q}{2q-1}}
        \cdot
        \big\vert \widetilde Z_{r}^{(n)} - Y_n \big\vert^{\frac{2q}{2q-1}} 
    \bigg]
    \, \dd r \dd s
    +
    C h\\
%%----------------------------------------------------
    & \leq
    \tfrac{(2q-2){\color{black}(2q-1)}}{h}
    \int_{t_n}^t
    \int_{t_n}^s
    \E \bigg[
        \big\vert V_{\varepsilon,q}(\widetilde Z_r^{(n)}) \big\vert^{\frac{2q-2}{2q-1}}
        \cdot
        \big\vert \widetilde Z_{r}^{(n)} - Y_n \big\vert^{\frac{2q}{2q-1}} 
    \bigg]
    \, \dd r \dd s
    +
    C h\\
%%===========================================
    & \leq
    \tfrac{(2q-2)^2}{h}
    \int_{t_n}^t
    \int_{t_n}^s
    \E \Big[
        V_{\varepsilon,q}(\widetilde Z_r^{(n)})
    \Big]
    \, \dd r \dd s
    +
    C h\\
%%---------------------------------------------------
    & \quad 
    +
    \tfrac{{\color{black}2q-2}}{h}
    \int_{t_n}^t
    \int_{t_n}^s
    \E \Bigg[\bigg\vert
        \int_{t_n}^{r}
            \big( - c_0 + c_1 Y_n + f_h(Y_n) \big)
        \, \dd r_1
        +
        \int_{t_n}^{r}
            g_h(Y_n)
        \, \dd W_{r_1}
    \bigg\vert^{2q}
    \Bigg]
    \, \dd r \dd s.
    %\\
%%----------------------------------------------------
\end{aligned}
\end{equation}
{\color{black}
By the Jensen inequality, the H\"{o}lder inequality, the moment inequality \cite[Theorem 7.1]{mao2007stochastic}, and the fact that
\begin{equation*}
    \big\vert
        - c_0 + c_1 Y_n
    \big\vert^{2q}
    \leq
    C \cdot \big( \varepsilon + \vert Y_n \vert^2 \big)^q
    =
    C \cdot \big( \varepsilon + \vert \widetilde Z_{t_n}^{(n)} \vert^2 \big)^q
    =
    C \cdot V_{\varepsilon,q}(\widetilde Z_{t_n}^{(n)}),
\end{equation*}
one can bound the integrand in last term of \eqref{2023AS-eq:D_22-early} as follows:
\begin{align*}
    \E & \Bigg[\bigg\vert
        \int_{t_n}^{r}
            \big( - c_0 + c_1 Y_n + f_h(Y_n) \big)
        \, \dd r_1
        +
        \int_{t_n}^{r}
            g_h(Y_n)
        \, \dd W_{r_1}
    \bigg\vert^{2q}
    \Bigg]
    \\
%%===========================================
    & \leq
    2^{2q-1} \E \Bigg[\bigg\vert
        \int_{t_n}^{r}
            \big( - c_0 + c_1 Y_n + f_h(Y_n) \big)
        \, \dd r_1
    \bigg\vert^{2q}
    \Bigg]
    +
    2^{2q-1} \E \Bigg[\bigg\vert
        \int_{t_n}^{r}
            g_h(Y_n)
        \, \dd W_{r_1}
    \bigg\vert^{2q}
    \Bigg]
    \\
%==============================================
    & \leq
    2^{2q-1} (r-t_n)^{2q-1}
    \int_{t_n}^{r}
    \E \Big[\big\vert
        - c_0 + c_1 Y_n + f_h(Y_n)
    \big\vert^{2q}
    \Big]
    \, \dd r_1\\
    & \quad +
    2^{2q-1} (r-t_n)^{q-1}
    \int_{t_n}^{r}
    \E \Big[\big\vert
        g_h(Y_n)
    \big\vert^{2q}
    \Big]
    \, \dd r_1
    \\
%============================================
    & \leq
    2^{4q-2} (r-t_n)^{2q}
    \E \Big[\big\vert
        - c_0 + c_1 Y_n
    \big\vert^{2q}
    \Big]+
    2^{4q-2} (r-t_n)^{2q}
    \E \Big[\big\vert
        f_h(Y_n)
    \big\vert^{2q}
    \Big]\\
    & \quad 
    +
    2^{2q-1} (r-t_n)^{q}
    \E \Big[\big\vert
        g_h(Y_n)
    \big\vert^{2q}
    \Big]
    \\
%============================================
    & \leq
    2^{4q-2} (r-t_n)^{2q}
    C \cdot
    \E \Big[ V_{\varepsilon,q}(\widetilde Z_{t_n}^{(n)})
    \Big]
    +
    2^{4q-2} (r-t_n)^{2q}
    h^{-q}
    +
    2^{2q-1} (r-t_n)^{q}
    h^{-q}.
\end{align*}
Plugging this into \eqref{2023AS-eq:D_22-early} shows
}
\begin{equation}
\begin{aligned}
\label{2023AS-eq:D_22}
{\color{black}
D_{22}
 \leq
    (2q-2)^2 \int_{t_n}^t
        \sup_{t_n \leq r \leq s}
        \E \Big[ 
            V_{\varepsilon,q}(\widetilde Z_r^{(n)})
        \Big]
    \dd s
    +
    C h^{2q+1} \E \Big[ V_{\varepsilon,q}(\widetilde Z_{t_n}^{(n)}) \Big]
    + 
    C h}.
\end{aligned}
\end{equation}
Combining all the above estimations and taking supremum reveal that
\begin{equation}
\begin{aligned}
    \sup_{t_n \leq r \leq t}
    \E \Big[ 
        V_{\varepsilon,q}(\widetilde Z_r^{(n)}) 
    \Big]
    & \leq
    (1 + C h)\E \Big[ V_{\varepsilon,q}(\widetilde Z_{t_n}^{(n)}) \Big]
    +
    C \cdot
    \int_{t_n}^{t}
        \sup_{t_n \leq r \leq s}
        \E \Big[
            V_{\varepsilon,q}(\widetilde Z_r^{(n)})
        \Big]
    \, \dd s
    +
    C h.
\end{aligned}
\end{equation}
By the Gronwall lemma {\color{black}\cite[Theorem 8.1]{mao2007stochastic}} and $\eqref{2023AS-eq:rough_bound}$, we have 
\begin{equation}
    \sup _{t_{n}\leq t \leq t_{n+1}}
    \E \Big [
	V_{\varepsilon,q}(\widetilde Z_t^{(n)})
    \Big ] 
    \leq 
    \E \Big[ 
        V_{\varepsilon,q}(\widetilde Z_{t_n}^{(n)})
    \Big] \e ^{Ch}
    +
    C h \e^{Ch},
\end{equation}
%Letting $k \rightarrow + \infty$ we arrive at
%\begin{equation}
%    \sup _{t_{n}\leq t \leq t_{n+1}}
%    \E \Big [
%	V_{\varepsilon,q}(\widetilde Z_t^{(n)})
%    \Big ] 
 %   \leq 
%    \E \Big[ 
%        V_{\varepsilon,q}(\widetilde Z_{t_n}^{(n)})
%    \Big] \e ^{Ch}
%    +
%    C h \e^{Ch},
%\end{equation}
which directly infers that
\begin{equation}\label{2023AS-eq:Yn_before_iteration}
    \E \Big [
	V_{\varepsilon,q}(\widetilde Z_{t_{n+1}}^{(n)})
    \Big ] 
    =
    \E \Big[
        \big(
            \varepsilon
            +
            \vert Y_{n+1} - c_{-1} h Y_{n+1}^{-1} \vert^2
        \big)^{q}
    \Big]
    \leq
    \E \Big[
        \big(
            \varepsilon
            +
            \vert Y_n \vert^{2}
        \big)^q
    \Big]
    \e ^{C h}
    +
    C h \e^{C h}.
\end{equation}
%Note that \eqref{2023AS-eq:Yn_before_iteration} holds for any $\varepsilon \geq 2 c_{-1} h$.
From now on, we set $\varepsilon = 1 + c_{-1} + 2 c_{-1} h$.
On the one hand, one knows that
\begin{equation}
\begin{aligned}
    \E \Big[
        \big(
            \varepsilon
            +
            \vert Y_{n+1} - c_{-1} h Y_{n+1}^{-1} \vert^2
        \big)^{q}
    \Big]
    & =
    \E \Big[
        \big(
            1 + c_{-1} + 2 c_{-1} h
            +
            \vert Y_{n+1} \vert^2
            - 
            2 c_{-1} h 
            +
            {\color{black}c_{-1}^2 h^2}\vert Y_{n+1} \vert^{-2}
        \big)^{q}
    \Big]\\
    & \geq
    \E \Big[
        \big(
            1 + c_{-1}
            +
            \vert Y_{n+1} \vert^2
        \big)^{q}
    \Big].
\end{aligned}
\end{equation}
On the other hand, {\color{black}
by noting
\begin{equation*}
    1 + c_{-1} + 2 c_{-1} h + \vert Y_n \vert^2
    \leq
    1 + 2 h
    +
    c_{-1} (1 + 2 h)
    +
    (1 + 2 h)\vert Y_n \vert^2
    =
    (1 + 2 h) \big(1 + c_{-1} + \vert Y_n \vert^2 \big),
\end{equation*}
one can deduce
}
\begin{equation}
\begin{aligned}
    \E\Big[
    \big(
        \varepsilon + \vert Y_n \vert^2
    \big)^q
    \Big]
    & =
    \E\Big[
    \big(
        1 + c_{-1} + 2 c_{-1} h + \vert Y_n \vert^2
    \big)^q
    \Big]\\
%%==================================================
    & \leq
    (1 + 2 h)^q
    \E\Big[
    \big(
        1 + c_{-1} + \vert Y_n \vert^2
    \big)^q
    \Big]\\
%%==================================================
    & \leq
    \e^{2 h q }
    \E\Big[
    \big(
        1 + c_{-1} + \vert Y_n \vert^2
    \big)^q
    \Big].
\end{aligned}
\end{equation}
Thus, we conclude that
\begin{equation}
    \E \Big[
        \big(
            1 + c_{-1}
            +
            \vert Y_{n+1} \vert^2
        \big)^{q}
    \Big]
    \leq
    \E\Big[
    \big(
        1 + c_{-1} + \vert Y_n \vert^2
    \big)^q
    \Big]
    \e^{C  h}
    +
    C h e^{C h}.
\end{equation}
By iteration, we arrive at
\begin{equation}
\begin{aligned}
    \sup_{1 \leq n \leq N}
    \E \Big[ \big( 1 + c_{-1} + \vert Y_{n} \vert^2 )^{q} \Big]
    & \leq
    \sup_{1 \leq n \leq N}
    \E \Big[ \big( 1 + c_{-1} + \vert Y_0 \vert^{2} \big)^q \Big]
    \e^{C n h}
    +
    C
    <
     \infty,
\end{aligned}
\end{equation}
%for any {\color{black}$2 \leq q \leq \gamma + \half$}, $q \in \N$.
%The case for all 
%{\color{black}$1 \leq p \leq 2\gamma + 1$} is derived by the Lyapunov inequality and the proof is thus completed.
as required.
\qed

{\color{black}
Equipped with the assumptions on the modifications and the integrability of numerical solutions, we can derive an important result on the bounds of modification errors.

\begin{cor}
\label{2023AS-cor:Yn_modification_error}
%    Let the conditions of Lemma \ref{2023AS-lem:Y_integrability_gamma} hold.
Let Assumptions \ref{2023AS-ass:f_g_properties}, \ref{2023AS-ass:monotone_coupling_condition},
and \ref{2023AS-ass:modification_bounds}
hold.
Then it holds that
\begin{align}
    \E \Big[
        \big\vert f(Y_n) - f_h(Y_n) \big\vert^2
    \Big]
    \leq C h,
    \quad
    \E \Big[
        \big\vert g(Y_n) - g_h(Y_n) \big\vert^2
    \Big]
    \leq C h.
\end{align}

\end{cor}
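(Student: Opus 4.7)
The plan is to derive this corollary as a direct consequence of Assumption \ref{2023AS-ass:modification_bounds} combined with the uniform moment bound established in Lemma \ref{2023AS-lem:Y_integrability_gamma}. Since both inequalities we must prove have an identical structure, I would present a single unified argument, treating the $f$ and $g$ estimates in parallel.

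First, I would take expectations in the pointwise inequalities \eqref{2023AS-eq:modification_bounds}. Applied at the random point $Y_n > 0$ (the positivity from Lemma \ref{lem:positive-preserving} is what makes the bound legitimate, since Assumption \ref{2023AS-ass:modification_bounds} is stated only on $(0,+\infty)$), this gives
\begin{equation*}
\mathbb{E}\big[|f(Y_n) - f_h(Y_n)|^2\big] \leq C h\, \mathbb{E}\big[|Y_n|^{m_1}\big],
\quad
\mathbb{E}\big[|g(Y_n) - g_h(Y_n)|^2\big] \leq C h\, \mathbb{E}\big[|Y_n|^{m_2}\big].
\end{equation*}

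Second, I would invoke Lemma \ref{2023AS-lem:Y_integrability_gamma} to absorb the moment factors. The key is the compatibility of the indices: Assumption \ref{2023AS-ass:modification_bounds} imposes $m_1, m_2 \leq 2\gamma + 1$, precisely the admissible range of exponents for which Lemma \ref{2023AS-lem:Y_integrability_gamma} provides $\sup_{0 \leq n \leq N} \mathbb{E}[|Y_n|^p] < \infty$ uniformly in $N$. If $m_i \geq 2$, one applies the lemma directly with $p = m_i$; if $m_i < 2$, one first uses Jensen's (or equivalently $|Y_n|^{m_i} \leq 1 + |Y_n|^2$) to reduce to the $p = 2$ case, which is covered. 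In either case, $\mathbb{E}[|Y_n|^{m_i}] \leq C$ uniformly in $n$ and $h$, yielding the claimed bounds.

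There is no substantive obstacle here: all the heavy lifting was done in Lemma \ref{2023AS-lem:Y_integrability_gamma}, and the constraint $m_1, m_2 \leq 2\gamma+1$ in Assumption \ref{2023AS-ass:modification_bounds} was evidently engineered so that this corollary follows immediately. The only minor point worth flagging in the write-up is that the constant $C$ collected here depends on $T$, $x_0$, and the parameters $c_{-1}, c_1, c_2, c_3, \kappa, \rho, \gamma$ through the moment bound of Lemma \ref{2023AS-lem:Y_integrability_gamma}, but remains independent of the step-size $h$, which is exactly what is needed for the subsequent mean-square convergence analysis in Sect. \ref{2023AS-section:convergence_rate}.
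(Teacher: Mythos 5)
Your proof is correct and matches the approach the paper evidently intends (the paper leaves the corollary's proof implicit, stating only that it follows from the assumptions and the integrability lemma). You correctly combine the pointwise bound of Assumption \ref{2023AS-ass:modification_bounds} evaluated at the positive random variable $Y_n$ (positivity coming from Lemma \ref{lem:positive-preserving}) with the uniform moment bound of Lemma \ref{2023AS-lem:Y_integrability_gamma} for exponents $p \leq 2\gamma+1$, and you rightly handle the edge case $m_i < 2$ via $|Y_n|^{m_i} \leq 1 + |Y_n|^2$.
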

}
The above properties of the numerical 
approximations play an important role 
in the following error analysis.

%%%%%%%%%%%%%%%%%%%%%%%%%%%%%%%%%%%%%%%%%%%
\iffalse
{\color{lightgray}
On the other hand, the case $\kappa + 1 = \rho$ can be covered by exactly the same derivation process if the restrictions in Lemma \ref{2023AS-lem:monotone_coupling_condition_k+1=2rho} is fulfilled, namely,
\begin{itemize}
    \item $2 \leq 2q < \tfrac{2 c_2}{c_3^2} + 1$,
    \item $2 \leq 2q-2 < \tfrac{2 c_2}{c_3^2} + 1$.
\end{itemize}
Here we establish the lemma and omit the proof.

\begin{lem} \label{2023AS-lem:Y_integrability_k+1=2rho}
%Suppose Assumption \ref{2023AS-ass:parameters_1} stands and let $\half p \in \mathbb{N}$ additionally. 
If $\kappa + 1 = 2 \rho$,
then for any $2 \leq p < \tfrac{2 c_2}{c_3^2} + 1$ it holds that
\begin{equation}
    \sup _{0 \leq n \leq N} 
    \E \big[ \vert Y_{n} \vert ^{p} \big]
    < \infty.
\end{equation}
\end{lem}
}
\fi
%%%%%%%%%%%%%%%%%%%%%%%%%%%%%%%%%%%%%%%%%%%%%%%%%%%%%%%%%%%%

\section{Mean-square convergence rate of the proposed scheme } \label{2023AS-section:convergence_rate}
In this section, we present the analysis of the 
mean-square convergence rate for the proposed scheme on the mesh grid.
Two cases are considered including the non-critical case $\kappa + 1 > 2 \rho$ and the general critical case $\kappa + 1 = 2 \rho$. For both cases, we successfully identify 
the desired mean-square convergence rate of order one-half
for the considered scheme. The main convergence result is formulated as follows.

{
\color{black}
\begin{thm}
\label{2023AS-thm:convergence_rate_k+1>=2rho}
Let $\{X_{t}\}_{t\in[0,T]}$ and $\{Y_{n}\}_{0\leq n \leq N}$ be defined by \eqref{2023AS-eq:model_SDE} and \eqref{2023AS-eq:EUPE_scheme},
respectively.
Let Assumptions \ref{2023AS-ass:f_g_properties}, \ref{2023AS-ass:monotone_coupling_condition}, and \ref{2023AS-ass:modification_bounds} hold, with additionally
one of the following conditions standing:
\begin{enumerate}[(1)]
    \item $\kappa + 1 > 2 \rho$;
    \item $\kappa + 1 = 2 \rho$, 
    $m_1,m_2 \leq \tfrac{2 c_2}{c_3^2} + 1$,
    $\tfrac{c_2}{c_3^2} > 2 \kappa - \tfrac32$.
\end{enumerate}
Then there exists a constant $C>0$ independent of $h>0$ such that
\begin{equation}
    \sup _{0\leq n\leq N}
    \E \big[
        \big\vert X_{t_n}-Y_{n}\big\vert^2
    \big]
    \leq 
    C h.
\end{equation}
\end{thm}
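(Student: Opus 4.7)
Denote the one-step error by $e_n := X_{t_n} - Y_n$, so $e_0 = 0$. Subtracting the scheme \eqref{2023AS-eq:EUPE_scheme} from the integrated form of the SDE \eqref{2023AS-eq:model_SDE} over $[t_n, t_{n+1}]$ yields
\[
e_{n+1} - e_n = \int_{t_n}^{t_{n+1}} \big[c_{-1}(X_s^{-1} - Y_{n+1}^{-1}) + c_1(X_s - Y_n) + f(X_s) - f_h(Y_n)\big] \, \dd s + \int_{t_n}^{t_{n+1}} (g(X_s) - g_h(Y_n)) \, \dd W_s.
\]
The plan is to apply the algebraic identity $|e_{n+1}|^2 - |e_n|^2 = 2 e_{n+1}(e_{n+1} - e_n) - |e_{n+1} - e_n|^2$, which is tailored to exploit the semi-implicit treatment of the singular $c_{-1} X^{-1}$ term. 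Writing $X_s^{-1} - Y_{n+1}^{-1} = (X_s^{-1} - X_{t_{n+1}}^{-1}) + (X_{t_{n+1}}^{-1} - Y_{n+1}^{-1})$, the second summand multiplied by $e_{n+1} = X_{t_{n+1}} - Y_{n+1}$ is non-positive by the monotonicity of $x \mapsto -x^{-1}$ on $(0,\infty)$ and drops with favourable sign, while the first has $L^2(\Omega)$-norm of order $h^{1/2}$ by Lemmas \ref{2023AS-lem:Holder_continuous_k+1>2rho}--\ref{2023AS-lem:Holder_continuous_k+1=2rho}.

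For the explicit drift and diffusion pieces I would use the decomposition
\[
f(X_s) - f_h(Y_n) = [f(X_s) - f(X_{t_n})] + [f(X_{t_n}) - f(Y_n)] + [f(Y_n) - f_h(Y_n)],
\]
and analogously for $g$. The time-increments $f(X_s) - f(X_{t_n})$ and $g(X_s) - g(X_{t_n})$ are of order $h^{1/2}$ in $L^2(\Omega)$ by Lemma \ref{2023AS-lem:f_L^2_bounds}, which is precisely where the critical-case side condition $\tfrac{c_2}{c_3^2} > 2\kappa - \tfrac32$ is consumed. The modification errors $f(Y_n) - f_h(Y_n)$ and $g(Y_n) - g_h(Y_n)$ are of order $h^{1/2}$ in $L^2(\Omega)$ by Corollary \ref{2023AS-cor:Yn_modification_error}, where the constraint $m_1, m_2 \leq \tfrac{2 c_2}{c_3^2} + 1$ of case (2) guarantees that the required moments of $Y_n$ are supplied by Lemma \ref{2023AS-lem:Y_integrability_gamma}. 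The remaining monotone pieces $f(X_{t_n}) - f(Y_n)$ and $(g(X_{t_n}) - g(Y_n))^2$ are to be absorbed jointly with the quadratic remainder $|e_{n+1} - e_n|^2$ via the coupled monotonicity $\langle x - y, f(x) - f(y)\rangle + \tfrac12 |g(x) - g(y)|^2 \leq C|x - y|^2$, which holds under $\kappa + 1 \geq 2\rho$ and produces a term of size $Ch\,\E[|e_n|^2]$.

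The stochastic cross-term $2 e_{n+1} \int_{t_n}^{t_{n+1}} (g(X_s) - g_h(Y_n)) \, \dd W_s$ does not vanish under expectation, so I would split $e_{n+1} = e_n + (e_{n+1} - e_n)$: the $e_n$-part is $\mathcal{F}_{t_n}$-measurable and has vanishing expectation, while the $(e_{n+1} - e_n)$-part is balanced against $-|e_{n+1} - e_n|^2$ by Young's inequality so that, after the It\^o isometry, only $C\,\E[\int_{t_n}^{t_{n+1}} |g(X_s) - g_h(Y_n)|^2 \, \dd s]$ remains, already controlled above. Collecting all estimates (and handling $c_1(X_s - Y_n) = c_1(X_s - X_{t_n}) + c_1 e_n$ by H\"older continuity and Young's inequality) yields the one-step recursion $\E[|e_{n+1}|^2] \leq (1 + Ch)\E[|e_n|^2] + Ch^2$, from which the discrete Gronwall inequality delivers $\sup_{0 \leq n \leq N} \E[|e_n|^2] \leq Ch$. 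The principal obstacle is the partial implicitness of the scheme: because implicitness resides only in the $c_{-1}$-term, one must use $e_{n+1}$ (not $e_n$) to activate the $x \mapsto -x^{-1}$ monotonicity, which creates a $\Delta W_n$-dependent stochastic cross-term demanding the delicate splitting just described. A secondary technical point is reconciling the moments of $Y_n$ from Lemma \ref{2023AS-lem:Y_integrability_gamma} with the H\"older regularity of $X_t$ from Lemma \ref{2023AS-lem:f_L^2_bounds} in the critical case $\kappa + 1 = 2\rho$, which is precisely the role of the auxiliary hypotheses in case (2).
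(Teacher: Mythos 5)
Your sketch tracks the paper's own proof closely: isolate the implicit $c_{-1}Y_{n+1}^{-1}$ piece to exploit the monotone decrease of $x \mapsto -x^{-1}$, decompose the explicit drift/diffusion increments into a H\"older time-increment plus a monotone increment plus a modification error, invoke a coupled monotonicity estimate for $(f,g)$, and close by discrete Gronwall; the paper simply rearranges the error recursion so that $e_{n+1} - c_{-1}\big(X_{t_{n+1}}^{-1}-Y_{n+1}^{-1}\big)h$ sits on the left-hand side and squares, which is algebraically equivalent to your identity $|e_{n+1}|^2 - |e_n|^2 = 2e_{n+1}(e_{n+1}-e_n) - |e_{n+1}-e_n|^2$. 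One technical slip worth correcting: the coupled monotonicity with coefficient $\tfrac12$ that you invoked is too weak. Following your own balancing of $2(e_{n+1}-e_n)B$ against $-|e_{n+1}-e_n|^2$ by Young's inequality (with $B$ the stochastic increment), $\E\big[|B|^2\big]$ survives with coefficient one, and the further decomposition $g(X_s)-g_h(Y_n) = [g(X_s)-g(X_{t_n})]+[g(X_{t_n})-g(Y_n)]+[g(Y_n)-g_h(Y_n)]$ inflates it to $(1+\varepsilon)h\,\E\big[|g(X_{t_n})-g(Y_n)|^2\big]$; the estimate actually required is
\[
2\langle e_n, f(X_{t_n})-f(Y_n)\rangle + (1+\delta)\big\vert g(X_{t_n})-g(Y_n)\big\vert^2 \leq C\,|e_n|^2
\]
for some $\delta>0$, which the paper obtains by choosing $0<\delta\leq \tfrac{2c_2\kappa}{c_3^2\rho^2}-1$; the critical-case hypothesis $\tfrac{c_2}{c_3^2} > 2\kappa - \tfrac32$ is consumed here as well as in Lemma~\ref{2023AS-lem:f_L^2_bounds}. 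Since your case~(2) side conditions coincide with the paper's, the slip is harmless once corrected and the overall argument is sound.
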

}
\textbf{Proof:}
To carry out the error analysis, we first obtain
the error recurrence formula.
By recalling definitions of $X_t$, $Y_n$, $f$, $g$ in \eqref{2023AS-eq:model_SDE}, \eqref{2023AS-eq:EUPE_scheme}, \eqref{2023AS-eq:f_g_original_definition} and denoting 
\[
e_n := X_{t_n} - Y_n,
\quad
n =0, 1,..., N
\]
for brevity, one can  straightforwardly infer
{\color{black}
\begin{equation}\label{2023AS-eq:en_definition}
\begin{aligned}
    e_{n+1}
    -
    c_{-1} \big( X_{t_{n+1}}^{-1} - Y_{n+1}^{-1} \big) h
    & =
    (1 + c_1 h)e_n
    +
    \big( f(X_{t_n}) - f(Y_n) \big) h
    +
    \big( 
        g(X_{t_n}) - g(Y_n)
    \big)
    \Delta W_n
    + 
    R_{n+1},
\end{aligned} 
\end{equation}
where
\begin{equation}\label{2023AS-eq:Rn_definition}
\begin{aligned}
    R_{n+1}
    & :=
    \int_{t_n}^{t_{n+1}}
    \Big(
        c_{-1} X_s^{-1}
        + 
        c_1 X_s
        +
        f(X_s)
    \Big)\, \dd s
    +
    \int_{t_n}^{t_{n+1}}
    g(X_s)
    \, \dd W_s
    -
    c_1 X_{t_n} h
    -
    c_{-1} X_{t_{n+1}}^{-1} h\\
%%-------------------------------------------
    & \quad 
    -
    \big( f(X_{t_n}) - f(Y_n) \big) h
    -
    \big( g(X_{t_n}) - g(Y_n) \big)
    \Delta W_n
    -
    f_h (Y_n) h
    -
    g_h (Y_n) \Delta W_n\\
%%===============================================
    & =
    \underbrace{
    \int_{t_n}^{t_{n+1}}
    \Big[
    c_{-1} \big( X_s^{-1} - X_{t_{n+1}}^{-1} \big)
 %   \, \dd s
    +
   % \int_{t_n}^{t_{n+1}}
    c_1 ( X_s - X_{t_n} )
  %  \, \dd s\\
%%------------------------------------------
  %  & \quad 
    +
  %  \int_{t_n}^{t_{n+1}}
    \big( f(X_s) - f(X_{t_n}) \big)
   % \, \dd s
    +
    %\int_{t_n}^{t_{n+1}}
    ( f(Y_n) - f_h(Y_n) )
    \Big]
    \, \dd s
    }_{=: R_{n+1}^{(1)}}
    \\
%%------------------------------------------
    & \quad 
    +
    \underbrace{
    \int_{t_n}^{t_{n+1}}
    \Big[
    \big( 
        g(X_s) - g(X_{t_n})
    \big)
    +
    \big( 
        g(Y_n) - g_h(Y_n)
    \big)
    \Big]
    \, \dd W_s
    }_{=: R_{n+1}^{(2)}}.%\\
%%================================================
%    &  = :
  %  R_{n+1}^{(2)} + R_{n+1}^{(2)}.
\end{aligned}
\end{equation}
}
%Here $R_{n+1}^{(1)}$ represents the first four terms and $R_{n+1}^{(2)}$ the rest two terms concerning stochastic integrals.
{\color{black}
\iffalse
we have
\begin{equation}\label{2023AS-eq:en_definition}
\begin{aligned}
    e_{n+1}
    -
    c_{-1} \big( X_{t_{n+1}}^{-1} - Y_{n+1}^{-1} \big) h
    & =
    (1 + c_1 h)e_n
    +
    ( f(X_{t_n}) - f(Y_n) ) h
    +
    ( g(X_{t_n}) - g(Y_n) ) \Delta W_n
    + 
    R_{n+1}.
\end{aligned} 
\end{equation}
\fi
Squaring both sides of \eqref{2023AS-eq:en_definition}  and using the Young inequality four times yield
%some elementary rearrangements yield
\begin{equation}\label{2023AS-eq:en_squared}
\begin{aligned}
    & \vert e_{n+1} \vert^2
    -
    2 c_{-1} h \big\langle e_{n+1}, X_{t_{n+1}}^{-1} - Y_{n+1}^{-1} \big\rangle
    +
    c_{-1}^2 h^2 
    \big\vert X_{t_{n+1}}^{-1} - Y_{n+1}^{-1} \big\vert^2\\
%%==================================================
    & =
    (1 + c_1 h)^2 \vert e_n \vert ^2
    +
    \vert f(X_{t_n}) - f(Y_n) \vert^2 h^2
    +
    \vert g(X_{t_n}) - g(Y_n) \vert^2 |\Delta W_n|^2
    +
    \vert R_{n+1} \vert^2\\
%%--------------------------------------------------
    & \quad
    +
    2 h (1 + c_1 h) \langle e_n,  f(X_{t_n}) - f(Y_n) \rangle
    +
    2 (1 + c_1 h) \langle e_n, ( g(X_{t_n}) - g(Y_n) ) \Delta W_n \rangle\\
%%--------------------------------------------------
    & \quad
    +
    2 (1 + c_1 h) \langle e_n, R_{n+1} \rangle
    +
    2 h \big\langle f(X_{t_n}) - f(Y_n), ( g(X_{t_n}) - g(Y_n) ) \Delta W_n \big\rangle\\
%%--------------------------------------------------
    & \quad
    +
    2 h \big\langle  f(X_{t_n}) - f(Y_n), R_{n+1} \big\rangle
    +
    2 \big\langle ( g(X_{t_n}) - g(Y_n) ) \Delta W_n, R_{n+1} \big\rangle\\
%%=================================================
    & \leq
    (1 + c_1 h)^2 \vert e_n \vert ^2
    +
    \vert f(X_{t_n}) - f(Y_n) \vert^2 h^2
    +
    \vert g(X_{t_n}) - g(Y_n) \vert^2 |\Delta W_n|^2
    +
    \vert R_{n+1} \vert^2\\
%%--------------------------------------------------
    & \quad
    +
    2 h \langle e_n,  f(X_{t_n}) - f(Y_n) \rangle
    + 
    c_1 h \vert e_n \vert^2
    +
    c_1 h^3 \vert f(X_{t_n}) - f(Y_n) \vert^2\\
%%--------------------------------------------------
    & \quad
    +
    2 (1 + c_1 h) \langle e_n, ( g(X_{t_n}) - g(Y_n) ) \Delta W_n \rangle
    +
    2 (1 + c_1 h) \langle e_n, R_{n+1}^{(2)} \rangle
    +
    h (1 + c_1 h)^2 \vert e_n \vert^2
    +
    \tfrac{1}{h} \vert R_{n+1}^{(1)} \vert^2\\
%%--------------------------------------------------
    & \quad
    +
    2 h \big\langle f(X_{t_n}) - f(Y_n), ( g(X_{t_n}) - g(Y_n) ) \Delta W_n \big\rangle
    +
    h^2 \vert  f(X_{t_n}) - f(Y_n) \vert^2
    +
    \vert R_{n+1} \vert^2\\
%%--------------------------------------------------
    & \quad
    +
    \delta \vert g(X_{t_n}) - g(Y_n) \vert ^2 |\Delta W_n|^2
    +
    \tfrac{1}{\delta} \vert R_{n+1} \vert^2,
\end{aligned}
\end{equation}
where $\delta > 0$ is some positive constant.
By noting that 
{\color{black}$e_n$, $f(X_{t_n})-f(Y_n)$, $g(X_{t_n})-g(Y_n)$ are $\mathcal{F}_{t_n}$-measurable,
one can easily see that}
\begin{align*}
    \E \big[ \langle e_n, ( g(X_{t_n}) - g(Y_n) ) \Delta W_n \rangle \big]
    & = 0,\\
%==============================================
    \E \big[ \big\langle f(X_{t_n}) - f(Y_n), ( g(X_{t_n}) - g(Y_n) ) \Delta W_n \big\rangle \big]
    & = 0.
\end{align*}
{\color{black}
Since 
$e_n$ is $\mathcal{F}_{t_n}$-measurable,
the property of stochastic integrals implies that
\begin{align*}
    \E \big[ \langle e_n, R_{n+1}^{(2)} \rangle \big]
    =
    \E \bigg[
    \int_{t_n}^{t_{n+1}}
    e_n
    \Big(
    \big( 
        g(X_s) - g(X_{t_n})
    \big)
    +
    \big( 
        g(Y_n) - g_h(Y_n)
    \big)
    \Big)
    \, \dd W_s
    \bigg]
    =
    0.
\end{align*}
}
{\color{black}In addition, we note that}
\begin{align}
    \big\langle e_{n+1}, X_{t_{n+1}}^{-1} - Y_{n+1}^{-1} \big\rangle
    =
    \int_0^1
       -\Big( Y_{n+1} + \theta(X_{t_{n+1}} - Y_{n+1}) \Big)^{-2}
    \dd \theta
    \cdot
    (X_{t_{n+1}} - Y_{n+1})^2
    \leq
    0.
\end{align}
Therefore, we take expectations on both sides of \eqref{2023AS-eq:en_squared} 
and do some rearrangements
to arrive at 
\begin{equation}\label{2023AS-eq:en_expectation}
\begin{aligned}
    \E \big[ \vert e_{n+1} \vert^2 \big]
    & \leq
    (1 + (1 + 3 c_1) h + (c_1^2 + 2 c_1) h^2 + c_1^2 h^3) 
    %(1 + c_1 h)^2
    \E \big[ \vert e_n \vert ^2 \big]
    +
    (2 + c_1 h) h^2 \E \big[ \vert f(X_{t_n}) - f(Y_n) \vert^2 \big] 
    \\
%%--------------------------------------------------
    & \quad
    +
    (1 + \delta) h \E \big[ \vert g(X_{t_n}) - g(Y_n) \vert^2 \big]
    +
    2 h \E \big[ \langle e_n,  f(X_{t_n}) - f(Y_n) \rangle \big]
    \\
    & \quad
    +
    (2 + \tfrac{1}{\delta}) \E \big[ \vert R_{n+1} \vert^2 \big]
    +
    \tfrac{1}{h} \E \big[ \vert R_{n+1}^{(1)} \vert^2 \big].
\end{aligned}
\end{equation}
It is not difficult to check that
\begin{equation}\label{2023AS-eq:1+delta_g+f}
\begin{aligned}
    & (1 + \delta) \big\vert g(X_{t_n}) - g(Y_n) \big\vert^2 
    +
    2  \langle e_n, f(X_{t_n}) - f(Y_n) \rangle\\
    & =
    (1+\delta) c_3^2 (X_{t_n}^{\rho} - Y_n^{\rho})^2 
    - 
    2 c_2 (X_{t_n} - Y_n) (X_{t_n}^{\kappa} - Y_n^{\kappa})
    \\
    & =
    (1+\delta) c_3^2 (X_{t_n} - Y_n)^2 
    \bigg(
        \int_0^1 \rho (Y_n + \theta (X_{t_n} - Y_n))^{\rho - 1}
        \, \dd \theta
    \bigg)^2
    \\
%%------------------------------------------------
    & \quad
    -
    2 c_2 (X_{t_n} - Y_n)^2
    \int_0^1 \kappa (Y_n + \theta (X_{t_n} - Y_n))^{\kappa - 1}
    \, \dd \theta
\\
& 
\leq
(X_{t_n} - Y_n)^2 
        \int_0^1 
        \big[
        (1+\delta) c_3^2
        \rho^2 
        (
        \theta X_{t_n} +  (1 -\theta )  Y_n
        )^{ 2 \rho - 2}
        -
        2 c_2 \kappa 
        (
        \theta X_{t_n} +  (1 -\theta )  Y_n
        )^{\kappa - 1}
        \big]
        \, \dd \theta
    .
\end{aligned}    
\end{equation}
{\color{black}%In view of this, 
Here, we claim that for any $u \geq 0$,
\begin{equation}\label{2023AS-eq:u_bound}
    (1+\delta) c_3^2
    \rho^2 
    u^{ 2 \rho - 2}
    -
    2 c_2 \kappa 
    u^{\kappa - 1}
    \leq
    \tilde{L}.
\end{equation}}
Indeed, for the case  $\kappa + 1 > 2 \rho$, the assertion is obvious by noting $\kappa - 1 > 2 \rho - 2$ and 
%$2 c_2 \kappa >0$,
{\color{black}the fact that the coefficient $- 2 c_2 \kappa$ of the higher-order term $u^{\kappa - 1}$ is negative.}
For the critical case $\kappa + 1 = 2 \rho$, the assumption $\tfrac{c_2}{c_3^2} > 2 \kappa - \tfrac32$ implies {\color{black}that $ \tfrac{c_2}{c_3^2} > \tfrac{\kappa}{2} + \tfrac32(\kappa - 1) > \tfrac{\kappa}{2}$ and thus $\tfrac{2 c_2 \kappa}{c_3^2 \rho^2} - 1 > \tfrac{\kappa^2}{\rho} - 1 > \rho - 1 > 0$.}
%$ \tfrac{c_2}{c_3^2} > \kappa /2$ and thus $\tfrac{2 c_2 \kappa}{c_3^2 \rho^2} - 1 > 0$. 
As a result, one can choose 
$
    0 < \delta \leq \tfrac{2 c_2 \kappa}{c_3^2 \rho^2} - 1
$ 
such that
$(1+\delta)c_3^2\rho^2 - 2 c_2 \kappa \leq 0$,
which ensures that {\color{black}the coefficient of $u^{\kappa-1} = u^{2 \rho - 2}$ is still negative.
%\eqref{eq:claim} still remains valid.
A combination of the two cases confirms \eqref{2023AS-eq:u_bound}.
%which confirms \eqref{eq:claim}.}
Bearing \eqref{2023AS-eq:u_bound} in mind, one can readily derive from \eqref{2023AS-eq:1+delta_g+f} that}
\begin{equation}
\begin{aligned} \label{eq:claim}
    (1+\delta) h \E \big[ \big\vert g(X_{t_n}) - g(Y_n) \big\vert^2 \big]
    +
    2 h \E \big[  \langle e_n, f(X_{t_n}) - f(Y_n) \rangle \big]
    \leq
    \tilde{L}
    {\color{black}h \E \big[ \vert X_{t_n} - Y_n \vert^2 \big] }.
\end{aligned}    
\end{equation}
Taking \eqref{eq:claim} into account, 
one can deduce from \eqref{2023AS-eq:en_expectation} that 
\begin{equation}\label{2023AS-eq:en_delta_g^2+f}
\begin{aligned}
    \E \big[ \vert e_{n+1} \vert^2 \big]
     & \leq
    (1 + (1 + 3 c_1) h + (c_1^2 + 2 c_1) h^2 + c_1^2 h^3 {\color{black} + \tilde L}) 
    \E \big[ \vert e_n \vert ^2 \big]
    +
    (2 + c_1 h) h^2 \E \big[ \vert f(X_{t_n}) - f(Y_n) \vert^2 \big]
    \\
    & 
    \quad
    +
    (2 + \tfrac{1}{\delta}) \E \big[ \vert R_{n+1} \vert^2 \big]
    +
    \tfrac{1}{h} \E \big[ \vert R_{n+1}^{(1)} \vert^2 \big].
\end{aligned}
\end{equation}
Before proceeding further, two cases $\kappa + 1 > 2 \rho$ and $\kappa + 1 = 2 \rho$ are
further considered separately as follows.
%%%%%%%%%%%%%%%%%%%%%%%%%%%%%%%%%%%%%%%%%%%
\iffalse
we mention that, 
compared to similar $R_{n+1}$ @...@ in 
\cite[Theorem 4.5, Theorem 4.8]{wang2020mean},
The estimation of $R_{n+1}$ is exactly the same except for the terms
$
\int_{t_n}^{t_{n+1}}
    ( f(Y_n) - f_h(Y_n) )
    \, \dd s
$
and
$
    \int_{t_n}^{t_{n+1}}
    \big( 
        g(Y_n) - g_h(Y_n)
    \big)
     \, \dd W_s$.
%
Next two cases $\kappa + 1 > 2 \rho$ and $\kappa + 1 = 2 \rho$ have to be considered separately.
\fi
%%%%%%%%%%%%%%%%%%%%%%%%%%%%%%%%%%%%%%%%%%%%%
%
\begin{enumerate}[{Case} 1:]
\item $\kappa + 1 > 2 \rho$.
    {\color{black}Noticing that $\gamma$ is an arbitrary large constant ensuring $\kappa \leq \gamma + \half$ in the non-critical case due to Assumption \ref{2023AS-ass:monotone_coupling_condition}, with the aid of}
    %Due to 
    Lemma \ref{2023AS-lem:X_integrability_k+1>2rho} and Lemma \ref{2023AS-lem:Y_integrability_gamma}, one can infer that
\begin{equation}\label{2023AS-eq:fx-fy_gx-gy_bounds}
    \E \big[ \big\vert
        f(X_{t_n}) - f(Y_n)
    \big\vert^2 \big]
    \leq
    C
    \quad
    \text{and}
    \quad
    \E \big[ \big\vert
        g(X_{t_n}) - g(Y_n)
    \big\vert^2 \big]
    \leq
    C.
\end{equation}
Corollary \ref{2023AS-cor:Yn_modification_error} implies that
\begin{align}\label{2023AS-eq:Yn_modification_error}
    \E \big[
        \big\vert f(Y_n) - f_h(Y_n) \big\vert^2
    \big]
    \leq C h,
    \quad
    \E \big[
        \big\vert g(Y_n) - g_h(Y_n) \big\vert^2
    \big]
    \leq C h.
\end{align}
Equipped with \eqref{2023AS-eq:Yn_modification_error} as well as {\color{black}\eqref{2023AS-eq:Rn_definition},} Lemmas \ref{2023AS-lem:Holder_continuous_k+1>2rho} and \ref{2023AS-lem:f_L^2_bounds} one can directly %obtain
{\color{black}deduce by the Jensen inequality and the H\"{o}lder inequality that
\begin{align*}
    \E \Big[ \big\vert R_{n+1}^{(1)} \big\vert^2 \Big]
    & \leq
    4 \E \bigg[ \Big\vert \int_{t_n}^{t_{n+1}}
        c_{-1} (X_s^{-1} - X_{t_{n+1}}^{-1})
    \dd s \Big\vert^2 \bigg]
    +
    4 \E \bigg[ \Big\vert \int_{t_n}^{t_{n+1}}
        c_1 (X_s- X_{t_{n}})
    \dd s \Big\vert^2 \bigg]\\
%--------------------------------------------
    & \quad +
    4 \E \bigg[ \Big\vert \int_{t_n}^{t_{n+1}}
        (f(X_s) - f(X_{t_{n}}))
    \dd s \Big\vert^2 \bigg]
    +
    4 \E \bigg[ \Big\vert \int_{t_n}^{t_{n+1}}
        (f(Y_n) - f_h(Y_n))
    \dd s \Big\vert^2 \bigg]\\
%%==========================================
    & \leq
    4 h 
    \int_{t_n}^{t_{n+1}}
    \E \Big[
    \big\vert
        c_{-1} (X_s^{-1} - X_{t_{n+1}}^{-1})
    \big\vert^2
    \Big]
    \dd s 
    +
    4 h 
    \int_{t_n}^{t_{n+1}}
    \E \Big[
    \big\vert
        c_1 (X_s- X_{t_{n}})
    \big\vert^2
    \Big]
    \dd s\\
%--------------------------------------------
    & \quad +
    4 h 
    \int_{t_n}^{t_{n+1}}
    \E \Big[
    \big\vert
        (f(X_s) - f(X_{t_{n}}))
    \big\vert^2
    \Big]
    \dd s 
    +
    4 h 
    \int_{t_n}^{t_{n+1}}
    \E \Big[
    \big\vert
        (f(Y_n) - f_h(Y_n))
    \big\vert^2
    \Big]
    \dd s\\
%%==========================================
    & \leq
    C h^3.
\end{align*}
In a similar way, but with the It\^{o} isometry leads to}
\begin{equation}\label{2023AS-eq:Rn_bounds_k+1>2rho}
    \E \big[ \big\vert R_{n+1}^{(2)} \big\vert^2 \big]
    \leq 
    C h^2.
\end{equation}
    
    \item $\kappa + 1 = 2 \rho.$

%The analysis of the general critical case is more delicate because the moment boundedness of the solution becomes sensitive to the model parameters.
The difference for the critical case comes from the extra restriction that $\gamma \leq \tfrac{c_2}{c_3^2}$, as required by Assumption \ref{2023AS-ass:monotone_coupling_condition}, which together with Lemmas \ref{2023AS-lem:X_integrability_k+1=2rho} and \ref{2023AS-lem:Y_integrability_gamma} ensures the moment boundedness of 
$\E\big[ \vert X_t \vert^{p} \big]$ and $\E\big[ \vert Y_n \vert^{p} \big]$
for $p \leq 2\gamma + 1 \leq \tfrac{2c_2}{c_3^2} + 1$.
Therefore, on the condition $ \tfrac{c_2}{c_3^2} > \kappa - \half$, one can derive $2 \kappa$-th moment bounds for both analytic and numerical solutions
\begin{equation}\label{2023AS-eq:fx-fy_gx-gy_bounds_k+1=2rho}
    \E \big[ \big\vert
        f(X_{t_n}) - f(Y_n)
    \big\vert^2 \big]
    \leq
    C
    \quad
    \text{and}
    \quad
    \E \big[ \big\vert
        g(X_{t_n}) - g(Y_n)
    \big\vert^2 \big]
    \leq
    C.
\end{equation}

Recalling \eqref{2023AS-eq:modification_bounds}, one knows that
\begin{equation}
            \big\vert f (Y_n) - f_h(Y_n) \big\vert^2 
            \leq C h \vert Y_n \vert^{m_1},
            \quad
            \big\vert g (Y_n) - g_h(Y_n) \big\vert^2
            \leq C h \vert Y_n \vert^{m_2},
        \end{equation}
where $m_1, m_2 \leq 2 \gamma + 1 \leq \tfrac{2 c_2}{c_3^2} + 1$. 
Thus for $m_1, m_2 \leq \tfrac{2 c_2}{c_3^2} + 1$,
\begin{align}\label{2023AS-eq:Yn_modification_error_k+1=2rho}
    \E \big[
        \big\vert f(Y_n) - f_h(Y_n) \big\vert^2
    \big]
    \leq C h,
    \quad
    \E \big[
        \big\vert g(Y_n) - g_h(Y_n) \big\vert^2
    \big]
    \leq C h,
\end{align}
by Corollary \ref{2023AS-cor:Yn_modification_error}.
This together with Lemmas \ref{2023AS-lem:Holder_continuous_k+1=2rho} and \ref{2023AS-lem:f_L^2_bounds} {\color{black}similarly} helps us to deduce that,
on the more strict assumption $\tfrac{c_2}{c_3^2} > 2 \kappa - \tfrac32$,
\begin{equation}\label{2023AS-eq:Rn_bounds_k+1=2rho}
    \E \big[ \big\vert R_{n+1}^{(1)} \big\vert^2 \big]
    \leq 
    C h^3,
    \quad
    \E \big[ \big\vert R_{n+1}^{(2)} \big\vert^2 \big]
    \leq 
    C h^2.
\end{equation}
%
%Notice that $\tfrac{c_2}{c_3^2} > 2 \kappa - \tfrac32$ implies $\tfrac{c_2}{c_3^2} > \kappa - \half$.
\end{enumerate}
Based on the above discussion,  one can derive from \eqref{2023AS-eq:en_delta_g^2+f} that
\begin{equation}
\begin{aligned}
    \E \big[ \vert e_{n+1} \vert^2 \big]
    & \leq
    (1 + C h) \E \big[ \vert e_n \vert ^2 \big]
    +
    C h^2,
\end{aligned}
\end{equation}
which, by iteration gives
\begin{equation}
    \sup _{0\leq n\leq N}
    \E \big[ \vert e_{n} \vert^2 \big]
    \leq
    C h,
\end{equation}
as required.
\qed

Theorem \ref{2023AS-thm:convergence_rate_k+1>=2rho} can be applied to derive convergence rates for any scheme satisfying Assumptions \ref{2023AS-ass:f_g_properties}, \ref{2023AS-ass:monotone_coupling_condition} and \ref{2023AS-ass:modification_bounds}. 
The following corollary shows an application on the schemes in Example \ref{2023AS:eg-tamed_Euler_method}.
\begin{cor}[Convergence rates of semi-implicit tamed Euler methods]
    Let $\{X_{t}\}_{0 \leq t \leq T}$ and $\{Y_{n}\}_{0\leq n \leq N}$ be defined as \eqref{2023AS-eq:model_SDE} and \eqref{2023AS-eq:EUPE_scheme}, respectively.
    Let $f_h$ and $g_h$ be defined as in Example \ref{2023AS:eg-tamed_Euler_method}.
    Suppose that Assumptions \ref{2023AS-ass:f_g_properties}, \ref{2023AS-ass:monotone_coupling_condition}, and \ref{2023AS-ass:modification_bounds} hold, with additionally one of the following conditions standing:
    \begin{enumerate}[(1)]
        \item $\kappa + 1 > 2 \rho$.
        \item $\kappa + 1 = 2 \rho$, %$\tfrac{c_2}{c_3^2} \geq 2 \kappa - \tfrac32$,
        %$\beta \leq \tfrac{c_2}{c_3^2} - \kappa + \half$.
        $\tfrac{c_2}{c_3^2} \geq (2 \alpha + 1) \kappa - \tfrac12$.
    \end{enumerate}
    Then, there exists a constant $C>0$ independent of the step-size $h$ such that
\begin{equation}
    \sup _{0\leq n\leq N}
    \E \big[
        \big\vert X_{t_n}-Y_{n}\big\vert^2
    \big]
    \leq 
    C h.
\end{equation}
\end{cor}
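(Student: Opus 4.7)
The plan is to reduce the corollary to a direct invocation of Theorem \ref{2023AS-thm:convergence_rate_k+1>=2rho}, by checking that the tamed modifications $f_h(x) = -\tfrac{c_2 x^\kappa}{1 + h^\alpha x^{2\kappa\alpha}}$ and $g_h(x) = \tfrac{c_3 x^\rho}{1 + h^\alpha x^{2\kappa\alpha}}$ from Example \ref{2023AS:eg-tamed_Euler_method} fit the hypotheses of that theorem in each of the two cases. Conceptually, no new analysis is needed: Example \ref{2023AS:eg-tamed_Euler_method} has already verified Assumptions \ref{2023AS-ass:f_g_properties}, \ref{2023AS-ass:monotone_coupling_condition}, and \ref{2023AS-ass:modification_bounds} for these particular $f_h, g_h$, identifying the explicit exponents $m_1 = 2(2\alpha+1)\kappa$ and $m_2 = 2(2\kappa\alpha + \rho)$ and noting that $m_2 \leq m_1$. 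What remains is purely bookkeeping: matching the range of $\gamma$ permitted by Assumption \ref{2023AS-ass:monotone_coupling_condition} with the requirement $m_1, m_2 \leq 2\gamma + 1$ coming from Assumption \ref{2023AS-ass:modification_bounds}, and then with the case-specific conditions of Theorem \ref{2023AS-thm:convergence_rate_k+1>=2rho}.

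For the non-critical case $\kappa + 1 > 2\rho$, the scaling parameter $\gamma$ in Assumption \ref{2023AS-ass:monotone_coupling_condition} may be taken arbitrarily large, so the bound $m_1, m_2 \leq 2\gamma + 1$ is trivially available. Theorem \ref{2023AS-thm:convergence_rate_k+1>=2rho}(1) then applies verbatim and yields the claimed rate.

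For the critical case $\kappa + 1 = 2\rho$ the verification is a little more delicate. The admissible range is $\tfrac12 \leq \gamma \leq \tfrac{c_2}{c_3^2}$, and we need $\gamma$ large enough to satisfy $m_1 \leq 2\gamma + 1$, i.e.\ $\gamma \geq (2\alpha+1)\kappa - \tfrac12$; choosing $\gamma = \tfrac{c_2}{c_3^2}$ works precisely because the hypothesis $\tfrac{c_2}{c_3^2} \geq (2\alpha+1)\kappa - \tfrac12$ is equivalent to $m_1 = 2(2\alpha+1)\kappa \leq \tfrac{2c_2}{c_3^2} + 1$, and $m_2 \leq m_1$ then gives $m_2 \leq \tfrac{2c_2}{c_3^2} + 1$ as well. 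This is exactly condition (2) in Theorem \ref{2023AS-thm:convergence_rate_k+1>=2rho}, except for the inequality $\tfrac{c_2}{c_3^2} > 2\kappa - \tfrac32$; but since $\alpha \geq \tfrac12$ gives $(2\alpha+1)\kappa \geq 2\kappa$, we obtain
\begin{equation*}
\tfrac{c_2}{c_3^2} \;\geq\; (2\alpha+1)\kappa - \tfrac12 \;\geq\; 2\kappa - \tfrac12 \;>\; 2\kappa - \tfrac32,
\end{equation*}
so this condition is automatic. The mean-square rate $\tfrac12$ then follows from Theorem \ref{2023AS-thm:convergence_rate_k+1>=2rho}(2).

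The only genuinely delicate point, and the one I would flag as the main obstacle, is the simultaneous compatibility of three constraints on $\gamma$ in the critical regime: $\gamma \leq \tfrac{c_2}{c_3^2}$ (needed for the one-step monotone coupling inequality), $\gamma \geq \tfrac{\max(m_1,m_2)-1}{2} = (2\alpha+1)\kappa - \tfrac12$ (needed so that Corollary \ref{2023AS-cor:Yn_modification_error} supplies the $O(h)$ bounds on the modification errors for the numerical approximations), and $p \leq 2\gamma + 1$ for the relevant moment exponents $p$ appearing in the proof of Theorem \ref{2023AS-thm:convergence_rate_k+1>=2rho}. The assumption $\tfrac{c_2}{c_3^2} \geq (2\alpha+1)\kappa - \tfrac12$ is exactly what makes these constraints jointly consistent, and once that consistency is recorded, the proof reduces to citing the theorem.
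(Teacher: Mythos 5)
Your proposal is correct and follows the same route the paper intends: the corollary is a direct instantiation of Theorem~\ref{2023AS-thm:convergence_rate_k+1>=2rho} once one checks that the tamed $f_h,g_h$ from Example~\ref{2023AS:eg-tamed_Euler_method} satisfy the three assumptions, and your bookkeeping — $m_1=2(2\alpha+1)\kappa$, $m_2=2(2\kappa\alpha+\rho)\leq m_1$, the constraint $m_1\leq 2\gamma+1$ combined with $\gamma\leq\tfrac{c_2}{c_3^2}$ yielding $\tfrac{c_2}{c_3^2}\geq(2\alpha+1)\kappa-\tfrac12$, and the observation that $\alpha\geq\tfrac12$ makes the auxiliary condition $\tfrac{c_2}{c_3^2}>2\kappa-\tfrac32$ automatic — matches exactly the reasoning sketched in the paragraph following the corollary in the paper. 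Your write-up is in fact slightly more explicit than the paper's, which only records the resulting inequality without spelling out why the strict inequality in Theorem~\ref{2023AS-thm:convergence_rate_k+1>=2rho}(2) is subsumed.
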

We would like to mention that, for the general critical case $\kappa + 1 = 2 \rho$, one can observe a slightly more strict restriction on the model parameters 
than that for the 
%Theorem \ref{2023AS-%thm:convergence_rate_k+1>=2rho} or 
implicit methods (STMs) in \cite{wang2020mean},
where $\tfrac{c_2}{c_3^2} > 2 \kappa - \tfrac32$ is required. 
This is caused by the need of $m_1, m_2 \leq 2 \gamma + 1$ in order to confirm Assumption \ref{2023AS-ass:modification_bounds}.
More precisely,
as indicated by Example \ref{2023AS:eg-tamed_Euler_method},
for the general critical case, $\gamma \leq \tfrac{c_2}{c_3^2}$ as well as $ 2{\color{black} (2 \alpha\kappa + \rho) } = m_2 < m_1 = 2 (2 \alpha + 1) \kappa \leq 2\gamma + 1$ are required, leading to $ (2 \alpha + 1) \kappa \leq \tfrac{c_2}{c_3^2} + \half$ as mentioned.
\iffalse
\begin{cor}%\label{2023AS-thm:convergence_rate_k+1=2rho}
Suppose that Assumption \ref{2023AS-ass:f_g_properties}, \ref{2023AS-ass:monotone_coupling_condition} and \ref{2023AS-ass:modification_bounds} hold.
Let $\kappa + 1 = 2 \rho$.
Let $\{X_{t}\}_{t\in[0,T]}$ and $\{Y_{n}\}_{0\leq n \leq N}$ be defined as \eqref{2023AS-eq:model_SDE} and \eqref{2023AS-eq:EUPE_scheme}.
Let $f_h$ and $g_h$ be defined as in Example \ref{2023AS:eg-tamed_Euler_method}.
If $\tfrac{c_2}{c_3^2} \geq 2 \kappa - \tfrac32$ 
%and $\gamma \geq \kappa - \half$ 
and $\beta \leq \tfrac{c_2}{c_3^2} - \kappa + \half$ additionally, then there exists a constant $C>0$ independent of the step-size $h$ such that
\begin{equation}
    \sup _{0\leq n\leq N}
    \E \Big[
        \big\vert X_{t_n}-Y_{n}\big\vert^2
    \Big]
    \leq 
    C h.
\end{equation}
\end{cor}
\fi
}

\section{Numerical experiments} \label{2023AS-section:numerical_experiments}

In this section, numerical experiments are presented to illustrate the previous theoretical results.
Let us consider the following A\"{i}t-Sahalia model
\begin{equation}\label{2023AS-numerical_experiments:AS_model}
\begin{aligned}
		\mathrm{d} X_t=& (c_{-1} X_t^{-1} - c_{0}+ c_{1} X_t - c_{2} X_t^{\kappa})
		\, \mathrm{d} t
		+c_{3} X_t^{\rho} \mathrm{d} W_t,
  \quad
  X_{0}=1>0.
\end{aligned}
\end{equation}
The semi-implicit tamed Euler-type method (TEM) in Example \ref{2023AS:eg-tamed_Euler_method} with $\alpha = \half$ applied to the model reads:
%The numerical scheme reads as
\begin{equation}
    Y_{n+1}
    = 
    Y_n + c_{-1} Y_{n+1}^{-1} h 
    +
    \big( -c_0 + c_1 Y_n - \tfrac{c_2 Y_n^{\kappa}}{1 + \sqrt{h}Y_n^{\kappa}} \big)h
    +
    \tfrac{c_3 Y_n^{\rho}}{1 + \sqrt{h}Y_n^{\kappa}}
    \Delta W_n.
\end{equation}
The following three sets of parameters are taken as numerical tests.
\begin{example}\label{2023AS-eg1} % k+1 > 2 rho
    $\kappa=5,\rho=1.5,c_{-1}=1.5,c_{0}=2,c_{1}=1,c_{2}=2,c_{3}=1$.
\end{example}

\begin{example}\label{2023AS-eg2} % k+1 = 2 rho
    ${\color{black}\kappa=3,\rho=2},c_{-1}=1.5,c_{0}=2,c_{1}=1,c_{2}=4,{\color{black}c_{3}=0.5}$.
\end{example}
\begin{example}\label{2023AS-eg3} 
%From\cite{halidias2022boundary}
{\color{black}
$\kappa=2,\rho=1.5,c_{-1}=2,c_{0}=3,c_{1}=4,c_{2}=7,c_{3}=\sqrt{2}$.
}
\end{example}

Note that Example \ref{2023AS-eg1} satisfies the non-critical condition $\kappa+1>2\rho$, Example \ref{2023AS-eg2} satisfies the  critical condition $\kappa+1=2\rho$, {\color{black}and Example  \ref{2023AS-eg3}, borrowed from \cite[SET II]{halidias2022boundary}, satisfies the critical condition with $\kappa = 2$, $\rho = 1.5$ and $\tfrac{c_2}{c_3^2} = 2 \kappa - \half$.}
%
\iffalse
The sample paths are presented in Fig. %\ref{2023AS-fig:sample_path}
to verify that the proposed scheme is positive preserving.
\begin{figure}[hp]
	\includegraphics[width=0.5\linewidth, height=0.3\textheight]{sample paths1}
	\includegraphics[width=0.5\linewidth, height=0.3\textheight]{sample paths2}
	\caption[Figure 1.]
	{Sample paths under Case 1 (left) and Case 2 (right)%\label{2023AS-fig:sample_path}}
\end{figure}
\fi
%
The strong convergence rates are tested with the endpoint $T=1$.
{\color{black}For comparison, the backward Euler method (BEM) \cite{szpruch2011numerical}, 
which has been proved to be strong convergent with order 0.5 for the model \cite{wang2020mean}, is also tested. Moreover,}
%The 
numerical approximations {\color{black}produced by the BEM} with the step-size $h_{exact}=2^{-14}$ is used to simulate the ``exact" solutions, while $h= 2^{-i},i=4, 5, ..., 9$ are used for numerical solutions over $10^4$ Brownian paths.
The estimation error is calculated in terms of $ e_h: = \bigg( \sup\limits_{0\leq n\leq N} \E\Big[  \vert X_{t_n} - Y_n \vert^2 \Big] \bigg)^{\half}$. 
%The stochastic theta method (STM) \cite{wang2020mean} with $\theta = 1$, which coincides with the backward Euler-Maruyama method (BEM) \cite{szpruch2011numerical}, is also tested in comparison with TEM.

Fig. \ref{2023AS:fig-convergence_rate}
{\color{black}and Fig. \ref{2023AS:fig-convergence_rate_BPES}} plot the average sample errors against various step-sizes on a log-log scale {\color{black}for Examples \ref{2023AS-eg1}, \ref{2023AS-eg2}, and \ref{2023AS-eg3}}, with BEM in a blue solid line, TEM in a red solid line, {\color{black}and the boundary preserving explicit scheme (BPES) proposed in \cite{halidias2022boundary} in a green solid line. 
%It can be easily observed the order $0.5$ strong convergence for both BEM and TEM
The expected order $0.5$ of strong convergence is easily detected for both BEM and TEM}.
%Assume that $e_h = C h^q$, which means $\log e = \log C + q \log h$ equivalently.
By a least square fitting, the resulting convergence rate $q$ and the least squares residual are shown in Table \ref{2023AS-tab:least_squares_fit}, where one can also confirm the predicted convergence rate.
{\color{black}
For  Example \ref{2023AS-eg3} of the particular critical case $\kappa = 2, \rho = 1.5$, the BPES 
shows order one convergence and outperforms 
the other two order one-half schemes in terms of accuracy. Nevertheless, the BPES is simply designed for the particular critical case $\kappa = 2, \rho = 1.5$ (see \cite{halidias2022boundary} for more details), but the proposed explicit scheme, TEM, works for all cases.}

In addition, the computational costs of BEM and the proposed TEM scheme over $10^4$ Brownian paths are shown in Table \ref{2023AS-tab:time_costs}, and 50 out of the $10^4$ paths are shown in Fig. \ref{2023AS-fig:sample_path}. 
It is easy to see that the TEM can 
preserve positivity and reduce computational costs significantly, compared with BEM, especially as the parameter $\kappa$ increases.
\begin{figure}[htp]
\includegraphics[width=0.5\linewidth, height=0.3\textheight]{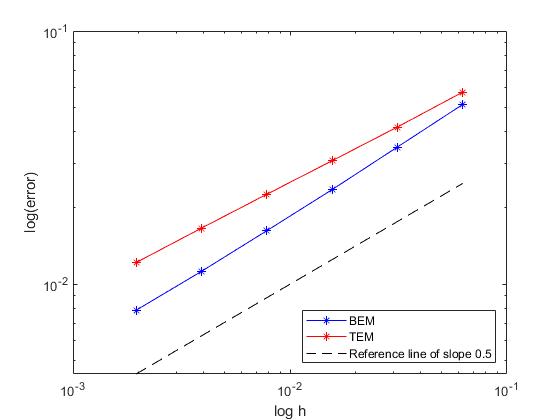}
\includegraphics[width=0.5\linewidth, height=0.3\textheight]{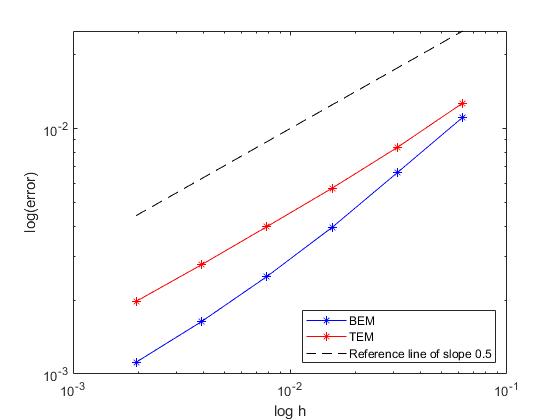}
\caption{Strong convergence rate of {\color{black}TEM and BEM} for Example \ref{2023AS-eg1} (left) and Example \ref{2023AS-eg2} (right).\label{2023AS:fig-convergence_rate}}
\end{figure}

\begin{figure}[htp]
\centering
\includegraphics[width=0.5\linewidth, height=0.3\textheight]{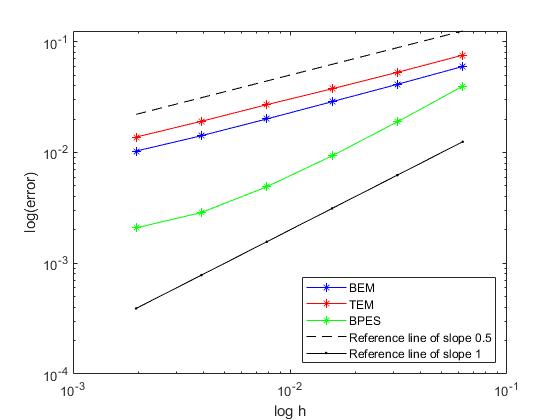}
\caption{{\color{black}Strong convergence rate of TEM and BEM for Example \ref{2023AS-eg3}.}\label{2023AS:fig-convergence_rate_BPES}}
\end{figure}

\begin{figure}[htp]
\centering
\includegraphics[width=0.6\linewidth, height=0.4\textheight]{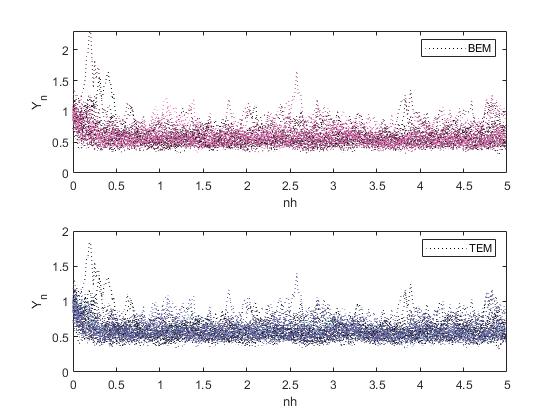}
\caption{Fifty paths of numerical solutions for BEM and TEM}
\label{2023AS-fig:sample_path}
\end{figure}

\begin{table}[htp]
	\centering
	\setlength{\tabcolsep}{7mm}
        \caption{A least square fit for the convergence rate $q$}\label{2023AS-tab:least_squares_fit}
	\begin{tabular}{c c c}
		\toprule[2pt]
		&  BEM	& TEM \\
		\midrule 
		{\color{black}Eg. \ref{2023AS-eg1}} & $q=0.5409$, $resid=0.0269$ &{\color{black} $q=0.4458$, $resid=0.0074$} \\
		{\color{black}Eg. \ref{2023AS-eg2}} & {\color{black} $q=0.6658$, $resid=0.1161$} & {\color{black}$q=0.5347$, $resid=0.0524$}\\
            {\color{black}Eg. \ref{2023AS-eg3}} & {\color{black}$q=0.5095$, $resid=0.0352$} & {\color{black}$q=0.4906$, $resid=0.0174$}\\
		\bottomrule [2pt]
	\end{tabular}
	\vspace{2pt}
\end{table}

\begin{table}[htp]
	\centering
	\setlength{\tabcolsep}{20mm}
        \caption{Time cost (seconds) for BEM and TEM over $10^4$ Brownian paths}\label{2023AS-tab:time_costs}
	\begin{tabular}{c c c}
		\toprule[2pt]
		  &  BEM & TEM\\
		\midrule 
		  Example \ref{2023AS-eg1} & {\color{black}135.586} & {\color{black}51.905} \\
            Example \ref{2023AS-eg2} & {\color{black}78.977} & {\color{black}36.189} \\
            {\color{black}Example \ref{2023AS-eg3}} & {\color{black}34.085} & {\color{black}25.358} \\
		\bottomrule [2pt]
	\end{tabular}
	\vspace{2pt}
\end{table}

\section{Conclusion} \label{2023AS-section:conclusion}

In this paper, we propose and analyze an explicit time-stepping scheme for the strong approximations of the generalized A\"{i}t-Sahalia model. 
Based on a semi-implicit strategy and 
proper modifications, the numerical scheme is designed to be positivity preserving and easily implementable with cheap computational costs 
compared with existing positivity preserving schemes.
The mean-square strong convergence rate of the scheme is carefully analyzed and proved to be order $0.5$. 
Numerical experiments are also provided to support the theoretical findings.
As an ongoing project, we successfully construct higher order efficient positivity preserving time-stepping schemes for the A\"{i}t-Sahalia model \cite{jiang2023unconditionally} and try to cover a larger range of stochastic models \cite{neuenkirch2014first}.
%cai2022positivity,mao2021positivity,

\section*{Declarations}

\subsection*{Ethical Approval} 
Not applicable

\subsection*{Availability of supporting data}
The authors confirm that the data supporting the findings of this study are available within the article.

\subsection*{Competing interests}
The authors declare no competing interests.

\subsection*{Funding}
This work is supported by the National Natural Science Foundation of China (Grant Nos. 12071488, 12371417, 11971488), the Natural Science Foundation of Hunan Province (Grant No.2020JJ2040).

\subsection*{Authors' contributions} 
All the authors contribute equally to this work.

\subsection*{Acknowledgments} 
Not applicable

\bibliographystyle{abbrv}
\bibliography{reference.bib}
\end{document}